\documentclass[10pt]{article}
\usepackage[affil-it]{authblk}
\usepackage{amsmath,amsthm,amssymb,cite}
\usepackage{lineno,hyperref}

\usepackage{calrsfs}

\usepackage{tikz}
\usetikzlibrary{arrows,chains,matrix,positioning,scopes}
\makeatletter
\tikzset{join/.code=\tikzset{after node path={%
\ifx\tikzchainprevious\pgfutil@empty\else(\tikzchainprevious)%
edge[every join]#1(\tikzchaincurrent)\fi}}}
\makeatother
\tikzset{>=stealth',every on chain/.append style={join},
         every join/.style={->}}

\newtheorem{theorem}{Theorem}[section]

\newtheorem{corollary}[theorem]{Corollary}
\theoremstyle{definition}
\newtheorem{remark}[theorem]{Remark}
\newtheorem{Definition}[theorem]{Definition}

\newtheorem{proposition}[theorem]{Proposition}

\title{\v{C}ech border homology and cohomology groups and some applications{\footnote{The author was supported by grant FR/233/5-103/14 from Shota Rustaveli National Science Foundation (SRNSF)}}}
\date{\vspace{-5ex}}

\author{Vladimer Baladze}
\affil{Department of Mathematics\\Batumi Shota Rustaveli  State University}
\providecommand{\keywords}[1]{\textbf{\textit{Keywords and Phrases:}} #1}
\begin{document}
\maketitle

\begin{abstract}
In the paper the \v{C}ech border homology and cohomology groups of closed pairs of normal spaces are constructed and investigated. These groups give intrinsic characterizations of \v{C}ech homology and cohomology groups based on finite open coverings, homological and cohomological coefficients of cyclicity, small and large cohomological dimensions of remainders of Stone-\v{C}ech compactifications of metrizable spaces. 
\end{abstract}
\keywords{\v{C}ech homology, \v{C}ech cohomology, Stone-\v{C}ech compactification, remainder, cohomological dimension, coefficient of cyclicity.}

\section*{Introduction}
The investigation and discussion presented in this paper are centered around the following problem:  

Find necessary and sufficient conditions under which a space of given class has a compactification whose remainder has the given topological property (cf. [Sm$_{2}$], Problem I, p.332 and Problem II, p.334).

This problem for different topological invariants and properties was studied by several authors: 
\begin{itemize}
\item[$\bullet$] J.M.Aarts [A], J.M.Aarts and T.Nishiura [A-N], Y. Akaike, N. Chinen and  K. Tomoyasu [Ak-Chin-T], V.Baladze [B$_1$], M.G. Charalambous [Ch], A.Chigogidze ([Chi$_1$], [Chi$_2$]),  H. Freudenthal ([F$_1$],[F$_{2}$]), K.Morita [Mo], E.G. Skljarenko [Sk], Ju.M.Smirnov ([Sm$_1$]-[Sm$_5$]) and H.De Vries [V] found conditions under which the spaces have extensions whose remainders have given covering and inductive dimensions, and combinatorial properties. 
\item[$\bullet$] The remainders of finite order extensions are defined and investigated by H.Inasaridze ([I$_1$], [I$_{2}$]). Using the results obtained in these papers, H.Inasaridze [I$_3$], L.Zambakhidze ([Z$_{1}$],[Z$_2$]), and I.Tsereteli [Ts] solved interesting problems of homological algebra, general topology and dimension theory.
\item[$\bullet$] $n$-dimensional (co)homology groups of remainders of precompact spaces are studied by V.Baladze [B$_3$], V.Baladze and L.Turmanidze [B-Tu].
\item[$\bullet$] A.Calder [C] described $n$-dimensional cohomotopy groups of remainders of Stone-\v{C}ech compactifications.
\item[$\bullet$] The characterizations of shapes of remainders of spaces are established in papers of V.Baladze ([B$_2$],[B$_{3}$]), B.J.Ball [Ba], J.Keesling ([K$_{1}$], [K$_2$]), J.Keesling and R.B. Sher [K-Sh].
\end{itemize}

The present paper is motivated by the general problem mentioned above. Specifically, we study this problem for the properties: \v{C}ech (co)homology groups based on finite open covers, coefficients of cyclicity and cohomological dimensions of remainders of Stone-\v{C}ech compactifications of metrizable spaces are given groups and given numbers, respectively.

In this paper we define the \v{C}ech type covariant and contravariant functors which coefficients in an abelian group $G$, 
\[\check{\rm{H}}_n^{\infty}(-,-;G):\mathcal{N}^{\bf 2}_{p}\to \mathcal{A}b\]
and 
\[\hat{\rm{H}}^n_{\infty}(-,-;G):\mathcal{N}^{\bf 2}_{p}\to \mathcal{A}b,\]
from the category $\mathcal{N}^{2}_{p}$ of closed pairs of normal spaces and proper maps to the category $\mathcal{A}b$ of abelian groups and homomorphisms. The construction of these functors is based on all border open covers of pairs $(X,A)\in ob(\mathcal{N}^{\bf 2}_{p})$ (see Definition 1.1 and Definition 1.2).

One of our main results of the paper is the following theorem (see Theorem 2.1). Let $\mathcal{M}^{2}_{p}$ be the category of closed pairs of metrizable spaces and proper maps. For each closed pair $(X,A)\in ob(\mathcal{M}^{\bf 2}_{p})$, one has
\[\check{{H}}_n^f(\beta X\setminus X, \beta A\setminus A;G)=\check{{H}}_n^{\infty}(X,A;G)\]
and
\[\hat{{H}}^n_f(\beta X\setminus X, \beta A\setminus A;G)=\hat{{H}}^n_{\infty}(X,A;G),\]
where $\check{{H}}_n^f(\beta X\setminus X, \beta A\setminus A;G)$ and $\hat{{H}}^n_f(\beta X\setminus X, \beta A\setminus A;G)$ are \v{C}ech homology and cohomology groups based on all finite open covers of $(\beta X\setminus X, \beta A\setminus A)$, respectively (see [E-St], Ch. IX, p.237).

We also consider the border cohomological and homological coefficients of cyclicity $\eta_G^{\infty}$ and $\eta^G_{\infty}$, border small and large cohomological dimensions $d^f_{\infty}(X;G)$ and $D^f_{\infty}(X;G)$ and prove the following relations (see Theorem 2.3, Theorem 2.5 and Theorem 2.8):
\[\eta_G^{\infty}(X,A)=\eta_G(\beta X\setminus X, \beta A\setminus A),\]
\[\eta_{\infty}^{G}(X,A)=\eta^{G}(\beta X\setminus X,\beta A\setminus A),\] 
\[d^f_{\infty}(X;G) = d_f(\beta X\setminus X;G),\]
\[D^f_{\infty}(X;G) = D_f(\beta X\setminus X;G),\]
where $\eta_G(\beta X\setminus X, \beta A\setminus A)$, $\eta^{G}(\beta X\setminus X, \beta A\setminus A)$ and $d_f(\beta X\setminus X;G)$, $D_f(\beta X\setminus X;G)$ are well known cohomological coefficient of cyclicity [No], homological coefficient of cyclicity (see Definition 2.2) and small cohomological dimension, large cohomological dimension [N] of remainders $(\beta X\setminus X, \beta A\setminus A)$ and $\beta X\setminus X$, respectively.

Without any further reference we will use definitions and results from the monographs General Topology [En], Algebraic Topology [E-St] and Dimension Theory [N].

\section{On \v{C}ech border homology and cohomology groups}
In this section we give an outline of a generalization of \v{C}ech homology theory by replacing the set of all finite open coverings in the definition of \v{C}ech (co)homology group ($\hat{{H}}^n_f(X,A;G)$) $\check{{H}}_n^f(X,A;G)$  (see [E-St],Ch.IX, p.237) by a set of all finite open families with compact enclosures. For this aim we give the following definitions.

An indexed family of subsets of set $X$ is a function $\alpha$ from an indexed set $V_{\alpha}$ to the set $2^{X}$ of subsets of $X$. The image $\alpha(v)$ of index $v\in V_{\alpha}$ is denoted by $\alpha_{v}$. Thus the indexed family $\alpha$ is the family $\alpha=\{\alpha_{v}\}_{v\in V_{\alpha}}$. If $|V_{\alpha}|<\aleph_{0}$, then we say that $\alpha$ family is a finite family.

Let $V^{'}_{\alpha}$ be a subset of set $V_{\alpha}$. A family $\{\alpha_{v}\}_{v\in V_{\alpha}^{'}}$ is called a subfamily of family $\{\alpha_{v}\}_{v\in V_{\alpha}}$. 

By $\alpha=\{\alpha_{v}\}_{v\in (V_{\alpha}, V_{\alpha}^{'})}$ we denote the family consisting of family $\{\alpha_{v}\}_{v\in V_{\alpha}}$ and its subfamily $\{\alpha_{v}\}_{v\in V_{\alpha}^{'}}$.  

\begin{Definition}(see [Sm$_4$]).
A finite family $\alpha=\{\alpha_{v}\}_{v\in V_{\alpha}}$ of open subsets of normal space $X$ is called a border cover of $X$ if its enclosure $K_{\alpha}=X\setminus \bigcup\limits_{v\in {{V}_{\alpha }}}{{{\alpha }_{v}}}$ is a compact subset of $X$.
\end{Definition}

\begin{Definition}(cf. [Sm$_{4}$]). A finite open family $\alpha=\{\alpha_{v}\}_{v\in (V_{\alpha}, V_{\alpha}^{A})}$ is called a border cover of closed pair $(X,A)\in \mathcal{N}^{2}$ if there exists a compact subset $K_{\alpha}$ of $X$ such that $X\setminus K_{\alpha}=\bigcup\limits_{v\in {{V}_{\alpha }}}{{{\alpha }_{v}}}$ and $A\setminus K_{\alpha}\subseteq \bigcup\limits_{v\in {{V}_{\alpha }^{A}}}{{{\alpha }_{v}}}$.
\end{Definition}

The set of all border covers of $(X,A)$ is denoted by ${\rm cov}_{\infty}(X,A)$. Let $K_{\alpha}^{A}=K_{\alpha}\cap A$. Then the family $\{\alpha_{v}\cap A\}_{v\in V_{\alpha}^{A}}$ is a border cover of subspace $A$. 

\begin{Definition}
Let $\alpha, \beta \in {\rm cov}_{\infty}(X,A)$ be two border covers of $(X,A)$ with indexing pairs $(V_{\alpha},V_{\alpha}^{A})$ and $(V_{\beta},V_{\beta}^{A})$, respectively. We say that the border cover $\beta$ is a refinement of border cover $\alpha$ if there exists a refinement projection function $p:(V_{\beta},V_{\beta}^{A})\to (V_{\alpha},V_{\alpha}^{A})$ such that for each index $v\in V_{\beta}$ ($v\in V_{\beta}^{A}$) $\beta_{v}\subset \alpha_{p(v)}$.
\end{Definition}

It is clear that ${\rm cov}_{\infty}(X,A)$ becomes a directed set with the relation $\alpha \leq \beta$ whenever $\beta$ is a refinement of $\alpha$.

Note that for each $\alpha \in {\rm cov}_{\infty}(X,A)$, $\alpha \leq \alpha$, and if for each $\alpha, \beta, \gamma \in {\rm cov}_{\infty}(X,A)$, $\alpha\leq \beta$ and $\beta \leq \gamma$, then $\alpha \leq \gamma$.

Let $\alpha, \beta \in {\rm cov}_{\infty}(X,A)$ be two border covers with indexing pairs $(V_{\alpha},V_{\alpha}^{A})$ and $(V_{\beta},V_{\beta}^{A})$, respectively. Consider a family $\gamma=\{\gamma_{v}\}_{v\in (V_{\gamma},V_{\gamma}^{A})}$, where $V_{\gamma}=V_{\alpha}\times V_{\beta}$ and $V_{\gamma}^{A}=V_{\alpha}^{A}\times V_{\beta}^{A}$. Let $v=(v_{1},v_{2})$, where $v_{1}\in V_{\alpha}$, $v_{2}\in V_{\beta}$. Assume that $\gamma_{v}=\alpha_{v_{1}}\cap \beta_{v_{2}}$. The family $\gamma=\{\gamma_{v}\}_{v\in (V_{\gamma},V_{\gamma}^{A})}$ is a border cover of $(X,A)$ and $\gamma \geq \alpha,\beta$.

For each border cover $\alpha\in \rm{cov}^{\infty}(X,A)$ with indexing pair $(V_{\alpha},V_{\alpha}^{A})$, by $(X_{\alpha},A_{\alpha})$ denote the nerve $\alpha$, where $A_{\alpha}$ is the subcomplex of simplexes $s$ of complex $X_{\alpha}$ with vertices of $V_{\alpha}^{A}$ such that $\rm{Car}_{\alpha}(s)\cap A\neq \emptyset$, where ${\rm Car}_{\alpha}(s)$ is the carrier of simplex $s$ (see [E-St], pp.234). The pair $(X_{\alpha},A_{\alpha})$ is a simplicial pair. Moreover, any two refinement projection functions $p,q:\beta \to \alpha$ induce contiguous simplicial maps of simplicial pairs $p_{\alpha}^{\beta},q_{\alpha}^{\beta}:(X_{\beta},A_{\beta})\to (X_{\alpha},A_{\alpha})$ (see [E-St], pp. 234-235).

Using the construction of formal homology theory of simplicial complexes ([E-St], Ch.VI) we can define the unique homomorphisms
\[p_{\alpha {*}}^{\beta}:H_{n}(X_{\beta},A_{\beta}:G)\to H_{n}(X_{\alpha},A_{\alpha};G)\]
and
\[p_{\alpha}^{\beta {*}}:H^{n}(X_{\alpha},A_{\alpha}:G)\to H_{n}(X_{\beta},A_{\beta};G),\]
where $G$ is any abelian coefficient group. 

Note that $p_{\alpha {*}}^{\alpha}=1_{H_{n}(X_{\alpha},A_{\alpha}:G)}$ and $p_{\alpha}^{\alpha {*}}=1_{H^{n}(X_{\alpha},A_{\alpha}:G)}$. If $\gamma \geq \beta \geq \alpha$ than 
\[p_{\alpha {*}}^{\gamma}=p_{\alpha {*}}^{\beta}\cdot p_{\beta {*}}^{\gamma}\]
and
\[p_{\alpha}^{\gamma {*}}=p_{\beta}^{\gamma *}\cdot p_{\alpha}^{\beta *}.\]

Thus, the families
\[\{H_{n}(X_{\alpha},A_{\alpha};G),p_{\alpha *}^{\beta}, {\rm cov}_{\infty}(X,A)\}\]
and
\[\{H^{n}(X_{\alpha},A_{\alpha};G),p_{\alpha }^{\beta *}, {\rm cov}_{\infty}(X,A)\}\]
form inverse and direct systems of groups.

The inverse and direct limit groups of above defined inverse and direct systems are denoted by symbols
\[\check{H}_{n}^{\infty}(X,A;G)=\underset{\longleftarrow }{\mathop{\lim }}\,\{H_{n}(X_{\alpha},A_{\alpha};G),p_{\alpha *}^{\beta},  {\rm cov}_{\infty}(X,A)\}\]
and
\[\hat{H}^{n}_{\infty}(X,A;G)=\underset{\longrightarrow}{\mathop{\lim }}\,\{H^{n}(X_{\alpha},A_{\alpha};G),p_{\alpha }^{\beta *},  {\rm cov}_{\infty}(X,A)\}\]
and called $n$-dimensional \v{C}ech border homology group and $n$-dimensional \v{C}ech border cohomology group of pair $(X,A)$ with coefficients in abelian group $G$, respectively.

According to [E-St] a border cover $\alpha\in {\rm cov}_{\infty}(X,A)$ indexed by $(V_{\alpha},V_{\alpha}^{A})$ is called proper if $V_{\alpha}^{A}$ is the set of all $v\in V_{\alpha}$ with $\alpha_{v}\cap A\neq \emptyset$. The set of proper border covers is denoted by ${\rm{Pcov}_{\infty}}(X,A)$. Now define a function 
\[\rho:{\rm cov}_{\infty}(X)\to {\rm cov}_{\infty}(X,A)\]
By definition, for each border cover of $X$ $\alpha=\{\alpha_{v}\}_{v\in V_{\alpha}}$ $$\rho (\alpha)=\{\alpha_{v}\}_{v\in (V_{\alpha},V^{'})},$$ where $V^{'}$ is the set of $v\in V_{\alpha}$ for which $\alpha_{v}\cap A\neq \emptyset$. It is clear that the family $\rho (\alpha)$ is a proper border cover and the function $\rho :{\rm cov}_{\infty}(X)\to {\rm{Pcov}_{\infty}}(X,A)$ induced by $\rho$ is one to one. Moreover, if $\alpha^{'}\leq \alpha$, then $\rho(\alpha^{'})\leq \rho (\alpha)$.

\begin{proposition}\label{Pro. 1.4}
For each pair $(X,A)\in ob (\mathcal{N}^{2}_{p})$ the set ${\rm{Pcov}_{\infty}}(X,A)$ of proper border covers of $(X,A)$ is a cofinal subset of ${\rm cov}_{\infty}(X,A)$.
\end{proposition}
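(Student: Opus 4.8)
The plan is to prove the statement straight from the definition of cofinality: given an arbitrary border cover $\alpha\in {\rm cov}_{\infty}(X,A)$, I will construct a proper border cover $\beta$ of $(X,A)$ with $\alpha\leq\beta$. The tempting shortcut --- keep the sets $\alpha_{v}$ and merely re-index so that the $A$-part of the indexing pair becomes $\{v:\alpha_{v}\cap A\neq\emptyset\}$ --- is the wrong move, because that generally \emph{enlarges} the $A$-part, whereas a refinement projection $p\colon(V_{\beta},V_{\beta}^{A})\to(V_{\alpha},V_{\alpha}^{A})$ is required to carry $V_{\beta}^{A}$ \emph{into} $V_{\alpha}^{A}$. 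So instead I would keep the index set fixed and shrink exactly those members of $\alpha$ that are not declared to meet $A$.

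Concretely, write $\alpha=\{\alpha_{v}\}_{v\in(V_{\alpha},V_{\alpha}^{A})}$ with compact enclosure $K_{\alpha}$, put $V_{\beta}=V_{\alpha}$ and set
\[\beta_{v}=\alpha_{v}\ \ (v\in V_{\alpha}^{A}),\qquad \beta_{v}=\alpha_{v}\setminus A\ \ (v\in V_{\alpha}\setminus V_{\alpha}^{A}),\]
each of which is open because $A$ is closed in $X$; finally declare $V_{\beta}^{A}=\{v\in V_{\beta}:\beta_{v}\cap A\neq\emptyset\}$, so that $\beta=\{\beta_{v}\}_{v\in(V_{\beta},V_{\beta}^{A})}$ is proper by construction. (Equivalently: $\{\beta_{v}\}_{v\in V_{\alpha}}$ is a border cover of the space $X$ with the same enclosure $K_{\alpha}$, and $\beta=\rho(\{\beta_{v}\}_{v\in V_{\alpha}})$.)

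The verification then proceeds in three routine steps. First, $\beta$ is again a border cover of $(X,A)$ with enclosure $K_{\alpha}$: writing $W=\bigcup_{v\in V_{\alpha}^{A}}\alpha_{v}$, Definition 1.2 gives $A\setminus K_{\alpha}\subseteq W$, so the part discarded from each $\beta_{v}$ with $v\notin V_{\alpha}^{A}$, namely $\alpha_{v}\cap A\subseteq(X\setminus K_{\alpha})\cap A=A\setminus K_{\alpha}$, already lies in $W$; hence $\bigcup_{v}\beta_{v}=W\cup\bigcup_{v\notin V_{\alpha}^{A}}(\alpha_{v}\setminus A)=\bigcup_{v}\alpha_{v}=X\setminus K_{\alpha}$, and moreover $A\setminus K_{\alpha}\subseteq W=\bigcup_{v\in V_{\alpha}^{A}}\beta_{v}$, which after discarding the empty terms $\beta_{v}\cap A$ yields $A\setminus K_{\alpha}\subseteq\bigcup_{v\in V_{\beta}^{A}}\beta_{v}$. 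Second, $V_{\beta}^{A}\subseteq V_{\alpha}^{A}$, since $\beta_{v}\cap A=\emptyset$ whenever $v\notin V_{\alpha}^{A}$. Third, the identity map of $V_{\alpha}$ is then a refinement projection $(V_{\beta},V_{\beta}^{A})\to(V_{\alpha},V_{\alpha}^{A})$ --- it carries $V_{\beta}^{A}$ into $V_{\alpha}^{A}$ by the second step, and $\beta_{v}\subseteq\alpha_{v}$ for every $v$ --- so $\alpha\leq\beta$. As $\beta\in{\rm Pcov}_{\infty}(X,A)$, this gives cofinality.

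I expect the first step --- more precisely, the single observation that deleting $A\cap\alpha_{v}$ from the members indexed outside $V_{\alpha}^{A}$ changes neither $\bigcup_{v}\alpha_{v}$ nor the subset of $A$ that gets covered --- to be the only point that is not pure bookkeeping with indexing pairs; it is exactly here that the defining inclusion $A\setminus K_{\alpha}\subseteq\bigcup_{v\in V_{\alpha}^{A}}\alpha_{v}$ of Definition 1.2 is used. Note that the argument uses only that $A$ is closed in $X$; neither normality of the spaces nor properness of maps is needed for this proposition.
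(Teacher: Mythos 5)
Your construction coincides with the paper's proof: the paper likewise keeps $\alpha_{v}$ for the indices whose members meet $A$, replaces the remaining members by $\alpha_{v}\setminus A$, and takes the new $A$-part of the indexing pair to be exactly the indices meeting $A$, so that the identity on indices is the refinement projection. The paper compresses the verification into the single sentence ``$\beta$ is a border cover of $(X,A)$ and $\beta\geq\alpha$,'' so your detailed check (including the correct warning that simply re-indexing via $\rho$ would enlarge the $A$-part and fail to refine) just makes explicit what the paper leaves implicit.
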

\begin{proof}
Let $\alpha=\{a_{v}\}_{v\in (V_{\alpha},V_{\alpha}^{A})}$ be a border cover of $(X,A)$. Assume that
\[V^{'}=\{\alpha_{v}|\alpha_{v}\cap A\neq \emptyset,v\in V_{\alpha}^{A})\}.\]
Consider a family $\beta=\{\beta_{v}\}_{v\in (V_{\alpha},V^{'})}$ consisting of subsets
\[\beta_{v}=\alpha_{v}\setminus A,~~~~~~v\in V_{\alpha}\setminus V^{'}\]
and
\[\beta_{v}=\alpha_{v},~~~~~v\in V^{'}.\]
Note that $\beta$ is a border cover of $(X,A)$ and $\beta\geq \alpha$.
\end{proof}

Consequently, in definitions of \v{C}ech border homology and cohomology groups of pairs $(X,A)\in ob(\mathcal{N}^{2}_{p})$ we may replace the set ${\rm cov}_{\infty}(X,A)$ by the subset ${\rm{Pcov}_{\infty}}(X,A)$.

Now we define, for a given proper map $f:(X,A)\to (Y,B)$ of pairs, the induced homomorphisms
\[f^{\infty}_{*}:\check{H}_{n}^{\infty}(X,A;G)\to \check{H}_{n}^{\infty}(Y,B;G)\]
and
\[f^{*}_{\infty}:\hat{H}^{n}_{\infty}(X,A;G)\to \hat{H}^{n}_{\infty}(Y,B;G).\]

Let $\alpha\in {\rm cov}_{\infty}(Y,B)$ be a border cover with index set $V_{\alpha}$ and $K_{\alpha}=Y\setminus \bigcup\limits_{v\in {{V}_{\alpha }}}{{{\alpha }_{v}}}$. Consider a family $\alpha^{'}=\{f^{-1}(\alpha_{v})\}_{v\in V_{\alpha}}$. Note that 

\[X\setminus \bigcup\limits_{v\in {{V}_{\alpha }}}{{{f^{-1}(\alpha }_{v})}}=X\setminus f^{-1}(\bigcup\limits_{v\in {{V}_{\alpha }}}{{{\alpha }_{v}}})=X\setminus f^{-1}(Y\setminus K_{\alpha})=f^{-1}(K_{\alpha}).
\]

Let $\alpha^{'}_{v}=f^{-1}(\alpha_{v})$ and $V_{\alpha^{'}}=V_{\alpha}$. Since $f$ is proper, $f^{-1}(K_{\alpha})$ is a compact subset of $X$.

Since $B\setminus K_{\alpha}\subseteq \bigcup\limits_{v\in {{V}_{\alpha }^{B}}}{{{\alpha }_{v}}}$, the subfamily $\{f^{-1}(\alpha_{v})|v\in V_{\alpha}^{B}\}$ is such that $A\setminus f^{-1}(K_{\alpha})\subseteq \bigcup\limits_{v\in {{V}_{\alpha }^{B}}}{{{f^{-1}(\alpha }_{v})}}$. Let $V_{\alpha^{'}}^{A}=V_{\alpha}^{B}$ and $K_{\alpha^{'}}=f^{-1}(K_{\alpha})$. Note that 
$A\setminus K_{\alpha^{'}}\subset \bigcup\limits_{v\in {{V}_{\alpha^{'}}^{A}}}{{{f^{-1}(\alpha }_{v})}}.$
Hence, $\alpha^{'}=\{f^{-1}(\alpha_{v})\}_{v\in (V_{\alpha^{'}},V_{\alpha^{'}}^{A})}$ is a border cover of pair $(X,A)$.

It is clear that $X_{\alpha^{'}}$ is a subcomplex of $Y_{\alpha}$ and $A_{\alpha^{'}}$ is a subcomplex of $B_{\alpha}$. By a symbol $f_{\alpha}:(X_{\alpha^{'}},A_{\alpha^{'}})\to (Y_{\alpha},B_{\alpha})$ denote the simplicial inclusion of $(X_{\alpha^{'}},A_{\alpha^{'}})$ into $(Y_{\alpha},B_{\alpha})$.

If $\alpha,\beta \in {\rm cov}_{\infty}(Y,B)$ and $\beta \geq \alpha$, then the diagrams
\begin{center}
\begin{tikzpicture}
\node (A) {$H_{n}(X_{\beta^{'}},A_{\beta^{'}};G)$};
\node (B) [node distance=4cm, right of=A] {$H_{n}(X_{\beta},A_{\beta};G)$};
\node (C) [node distance=2cm, below of=A] {$H_{n}(X_{\alpha^{'}},A_{\alpha^{'}};G)$};
\node (D) [node distance=4cm, right of=C] {$H_{n}(X_{\alpha},A_{\alpha};G)$};
\draw[->] (A) to node [above]{$f_{\beta *}$} (B);
\draw[->] (A) to node [left]{$p_{\alpha^{'}*}^{\beta^{'}}$}(C);
\draw[->] (C) to node [below]{$f_{\alpha *}$} (D);
\draw[->] (B) to node [right]{$p_{\alpha *}^{\beta}$} (D);
\end{tikzpicture}
\end{center}
and
\begin{center}
\begin{tikzpicture}
\node (A) {$H^{n}(X_{\alpha},A_{\alpha};G)$};
\node (B) [node distance=4cm, right of=A] {$H^{n}(X_{\alpha^{'}},A_{\alpha^{'}};G)$};
\node (C) [node distance=2cm, below of=A] {$H^{n}(X_{\beta},A_{\beta};G)$};
\node (D) [node distance=4cm, right of=C] {$H^{n}(X_{\beta^{'}},A_{\beta^{'}};G).$};
\draw[->] (A) to node [above]{$f_{\alpha}^{*}$} (B);
\draw[->] (A) to node [left]{$p_{\alpha}^{\beta *}$}(C);
\draw[->] (C) to node [below]{$f_{\beta}^{*}$} (D);
\draw[->] (B) to node [right]{$p_{\alpha^{'}}^{\beta^{'}*}$} (D);
\end{tikzpicture}
\end{center}
commute.

Thus, for each $\alpha \in {\rm cov}_{\infty}(Y,B)$, the induced homomorphisms $f_{\alpha *}$ and $f_{\alpha}^{*}$ together with function $\varphi :{\rm cov}_{\infty}(Y,B)\to {\rm cov}_{\infty}(X,A)$ given by formula $$\varphi(\alpha)=f^{-1}(\alpha), \alpha \in {\rm cov}_{\infty}(Y,B)$$ form maps 
\[(f_{\alpha *},\varphi):\{H_{n}(X_{\alpha^{'}},A_{\alpha^{'}}),p_{\alpha^{'} *}^{\beta^{'}},{\rm cov}_{\infty}(X,A)\}\to \{H_{n}(Y_{\alpha},A_{\alpha}),p_{\alpha *}^{\beta},{\rm cov}_{\infty}(Y,B)\}\]
and
\[(f_{\alpha}^{*},\varphi):\{H^{n}(Y_{\alpha},A_{\alpha}),p_{\alpha}^{\beta *},{\rm cov}_{\infty}(Y,B)\} \to \{H^{n}(X_{\alpha^{'}},A_{\alpha^{'}}),p_{\alpha^{'}}^{\beta^{'}_{*}},{\rm cov}_{\infty}(X,A)\}.\]

The limits of maps $(f_{\alpha *},\varphi)$ and $(f_{\alpha}^{*},\varphi)$ are denoted by \[f^{\infty}_{*}:\check{H}_{n}^{\infty}(X,A;G)\to \check{H}_{n}^{\infty}(Y,B;G)\]
and
\[f^{*}_{\infty}:\hat{H}^{n}_{\infty}(Y,B;G)\to \hat{H}^{n}_{\infty}(X,A;G)\]
and called homomorphisms induced by proper map $f:(X,A)\to (Y,B)$.

Note that if $f:(X,A)\to (Y,B)$ is the identity map, then the induced homomorphisms $f_{*}^{\infty}:\check{H}_{n}^{\infty}(X,A;G)\to \check{H}_{n}^{\infty}(Y,B;G)$ and $f^{*}_{\infty}:\hat{H}^{n}_{\infty}(Y,B;G)\to \hat{H}^{n}_{\infty}(X,A;G)$ are the identity homomorphisms. Furthermore, for each proper maps $f:(X,A)\to (Y,B)$ and $g:(Y,B)\to (Z,C)$
\[(g\cdot f)_{*}^{\infty}=g_{*}^{\infty}\cdot f_{*}^{\infty}\]
and
\[(g\cdot f)^{*}_{\infty}=f^{*}_{\infty}\cdot g^{*}_{\infty}.\]

We have the following theorem.
\begin{theorem}\label{theorem 1.4}
There exist the covariant and contravariant functors
\[\check{\rm{H}}_{*}^{\infty}(-,-;G):\mathcal{N}^{2}_{p}\to \mathcal{A}b\]
and
\[\hat{\rm{H}}^{*}_{\infty}(-,-;G):\mathcal{N}^{2}_{p}\to \mathcal{A}b\]
given by formulas
\[\check{\rm{H}}_{*}^{\infty}(-,-;G)(X,A)=\check{{H}}_{*}^{\infty}(X,A;G),~~~(X,A)\in ob(\mathcal{N}^{2}_{p})\]
\[\check{\rm{H}}_{*}^{\infty}(-,-;G)(f)=f_{*}^{\infty},~~~f\in \rm{Mor}_{\mathcal{N}^{2}_{p}}((X,A),(Y,B))\]
and
\[\hat{\rm{H}}^{*}_{\infty}(-,-;G)(X,A)=\hat{{H}}^{*}_{\infty}(X,A;G),~~~(X,A)\in ob(\mathcal{N}^{2}_{p})\]
\[\hat{\rm{H}}^{*}_{\infty}(-,-;G)(f)=f^{*}_{\infty},~~~f\in \rm{Mor}_{\mathcal{N}^{2}_{p}}((X,A),(Y,B)).\]
\end{theorem}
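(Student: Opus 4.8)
The plan is to observe that the construction carried out before the statement already supplies every ingredient — the object assignment, the bonding homomorphisms, the homomorphisms induced by a proper map, and the two naturality squares — so the proof consists of assembling these into the two functor axioms together with a brief well-definedness remark; all of it is essentially formal.

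\textbf{Object part.} The poset $\mathrm{cov}_\infty(X,A)$ was shown above to be directed, and for each refinement $\beta\ge\alpha$ the homomorphisms $p^\beta_{\alpha *}$ and $p^{\beta *}_\alpha$ are independent of the chosen refinement projection, since any two projections induce contiguous simplicial maps $(X_\beta,A_\beta)\to(X_\alpha,A_\alpha)$ and hence the same homomorphism on the simplicial (co)homology of the nerve (formal homology theory, [E-St], Ch.~VI). With the relations $p^\alpha_{\alpha *}=1$, $p^\gamma_{\alpha *}=p^\beta_{\alpha *}\circ p^\gamma_{\beta *}$ and their cohomological counterparts already recorded, the family $\{H_n(X_\alpha,A_\alpha;G),p^\beta_{\alpha *},\mathrm{cov}_\infty(X,A)\}$ is a genuine inverse system and the cohomological family a genuine direct system in $\mathcal{A}b$, so the limits $\check H^\infty_n(X,A;G)$ and $\hat H^n_\infty(X,A;G)$ are well-defined abelian groups.

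\textbf{Morphism part.} For a proper map $f\colon(X,A)\to(Y,B)$ it was already checked that $\varphi(\alpha)=f^{-1}(\alpha)$ belongs to $\mathrm{cov}_\infty(X,A)$ for each $\alpha\in\mathrm{cov}_\infty(Y,B)$; I would add only the one-line observation that $\varphi$ is order-preserving, a refinement projection witnessing $\beta\ge\alpha$ also witnessing $f^{-1}(\beta)\ge f^{-1}(\alpha)$ because $\beta_v\subset\alpha_{p(v)}$ forces $f^{-1}(\beta_v)\subset f^{-1}(\alpha_{p(v)})$. Together with the simplicial inclusions $f_\alpha\colon(X_{\alpha'},A_{\alpha'})\to(Y_\alpha,B_\alpha)$ and the two commuting squares displayed above, the pairs $(f_{\alpha *},\varphi)$ and $(f^*_\alpha,\varphi)$ are a morphism of inverse systems and a morphism of direct systems, respectively; applying the inverse- and direct-limit operations, which are functorial on the categories of such systems, gives the homomorphisms $f^\infty_*$ and $f^*_\infty$ and in particular makes them independent of the intermediate choices.

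\textbf{Functor axioms.} If $f=1$, then $\varphi$ is the identity on indices and every $f_\alpha$ is an identity simplicial map, so the induced limit map is the identity. For composable proper maps $f\colon(X,A)\to(Y,B)$ and $g\colon(Y,B)\to(Z,C)$, writing $\alpha'=g^{-1}(\alpha)$, the identity $(gf)^{-1}(\alpha)=f^{-1}(\alpha')$ gives $\varphi_{gf}=\varphi_f\circ\varphi_g$, while the nerves satisfy $(X_{(gf)^{-1}(\alpha)},A_{(gf)^{-1}(\alpha)})\subseteq(Y_{\alpha'},B_{\alpha'})\subseteq(Z_\alpha,C_\alpha)$ with $(gf)_\alpha=g_\alpha\circ f_{\alpha'}$; hence $(gf)_{\alpha *}=g_{\alpha *}\circ f_{\alpha' *}$ on simplicial homology and dually $(gf)^*_\alpha=f^*_{\alpha'}\circ g^*_\alpha$ on simplicial cohomology. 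Passing to limits yields $(gf)^\infty_*=g^\infty_*\circ f^\infty_*$ and $(gf)^*_\infty=f^*_\infty\circ g^*_\infty$, i.e.\ the covariance of $\check{\mathrm{H}}_*^\infty(-,-;G)$ and the contravariance of $\hat{\mathrm{H}}^*_\infty(-,-;G)$.

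I expect the only mildly delicate point to be the routine bookkeeping for limits of morphisms between inverse (resp. direct) systems indexed over \emph{different} directed sets: one must know that $\varprojlim$ and $\varinjlim$ are themselves functors on those categories, so that a composite of system morphisms maps to the composite of the induced limit homomorphisms. This is standard, all the requisite compatibility squares having been produced above, so no genuine obstacle arises.
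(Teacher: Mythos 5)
Your proposal is correct and coincides with the paper's approach: the paper's proof is literally the one line ``The proof follows from above discussion,'' and your argument is exactly that discussion assembled — well-definedness of the limit groups via contiguity, the induced system morphisms $(f_{\alpha*},\varphi)$ and $(f_\alpha^*,\varphi)$, and the identity and composition laws already recorded before the theorem. You merely make explicit the routine verifications (order-preservation of $\varphi$, functoriality of $\varprojlim$ and $\varinjlim$) that the paper leaves implicit.
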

\begin{proof}
The proof follows from above discussion.
\end{proof}
We will call the functors $\check{\rm{H}}_{*}^{\infty}(-,-;G)$ and $\hat{\rm{H}}^{*}_{\infty}(-,-;G)$  \v{C}ech border homology and cohomology functors, respectively.

Now we define boundary and coboundary homomorphisms 
\[\partial_{n}^{\infty}:\check{{H}}_{n}^{\infty}(X,A;G)\to \check{{H}}_{n-1}^{\infty}(A;G)\]
and
\[\delta^{n}_{\infty}:\hat{{H}}^{n-1}_{\infty}(A;G)\to \hat{{H}}^{n}_{\infty}(X,A;G).\]

Let $(X,A)\in ob(\mathcal{N}^{2}_{p})$, $\beta,\alpha\in {\rm cov}_{\infty}(X,A)$ and $\beta \geq \alpha$. The refinement projection functions induce the unique homomorphisms $p_{\alpha *}^{\beta}:H_{n}(A_{\beta};G)\to H_{n}(A_{\alpha};G)$ and $p_{\alpha}^{\beta *}:H^{n}(A_{\alpha};G)\to H^{n}(A_{\beta};G)$, $p_{\alpha *}^{\beta}:H_{n}(X_{\beta};G)\to H_{n}(X_{\alpha};G)$ and $p_{\alpha}^{\beta *}:H_{n}(X_{\alpha};G)\to H_{n}(X_{\beta};G)$, which form inverse systems
\begin{center}
$\{H_{n}(A_{\alpha};G),p_{\alpha *}^{\beta}, {\rm cov}_{\infty}(X,A)\}$ and $\{H_{n}(X_{\alpha};G),p_{\alpha *}^{\beta},{\rm cov}_{\infty}(X,A)\}$
\end{center}
and direct systems 
\begin{center}
$\{H^{n}(A_{\alpha};G),p_{\alpha}^{\beta *}, {\rm cov}_{\infty}(X,A)\}$ and $\{H^{n}(X_{\alpha};G),p_{\alpha}^{\beta *}, {\rm cov}_{\infty}(X,A)\}.$
\end{center}

Let
\[\check{H}_{n}^{\infty}(A;G)_{(X,A)}=\underset{\longleftarrow }{\mathop{\lim }}\,\{H_{n}(A_{\alpha};G),p_{\alpha *}^{\beta},{\rm cov}_{\infty}(X,A)\},\]
\[\check{H}_{n}^{\infty}(X;G)_{(X,A)}=\underset{\longleftarrow }{\mathop{\lim }}\,\{H_{n}(X_{\alpha};G),p_{\alpha *}^{\beta},{\rm cov}_{\infty}(X,A)\},\]
\[\hat{H}^{n}_{\infty}(A;G)^{(X,A)}=\underset{\longrightarrow }{\mathop{\lim }}\,\{H^{n}(A_{\alpha};G),p_{\alpha}^{\beta *},{\rm cov}_{\infty}(X,A)\},\]
\[\hat{H}^{n}_{\infty}(X;G)^{(X,A)}=\underset{\longrightarrow }{\mathop{\lim }}\,\{H^{n}(X_{\alpha};G),p_{\alpha}^{\beta *},{\rm cov}_{\infty}(X,A)\}.\]

Our main aim is to show that the groups $\check{{H}}_{n}^{\infty}(A;G)$ and $\hat{H}_{n}^{\infty}(A;G)_{(X,A)}$, $\hat{{H}}^{n}_{\infty}(A;G)$ and $\hat{H}^{n}_{\infty}(A;G)^{(X,A)}$, $\check{H}_{n}(X;G)$ and $\check{H}_{n}(X;G)_{(X,A)}$, $\hat{H}^{n}(X;G)$ and $\hat{H}^{n}(X;G)^{(X,A)}$ are isomorphical groups.

Next we define a function $\varphi:{\rm cov}_{\infty}(X,A)\to {\rm cov}_{\infty}(A,\emptyset)$. Let $\alpha=\{\alpha_{v}\}_{v\in (V_{\alpha},V_{\alpha}^{A})}\in {\rm cov}_{\infty}(X,A)$. Assume that $(\varphi(\alpha))_{v}=\alpha_{v}\cap A$ for $v\in V_{\alpha}^{A}$. We have defined the border cover $\varphi(\alpha)\in {\rm cov}_{\infty}(A,\emptyset)$ indexed by pair $(V_{\alpha},\emptyset)$. 

Let $K_{\alpha}=X\setminus \bigcup\limits_{v\in {{V}_{\alpha }}}{{{\alpha }_{v}}}$. Note that
\[A\setminus (K_{\alpha}\cap A)=\bigcup\limits_{v\in {{V}_{\alpha }^{A}}}({{{\alpha }_{v}\cap A}})=\bigcup\limits_{v\in {{V}_{\alpha }^{A}}}{{{(\varphi(\alpha))_{v} }}}.\]

It is clear that $K_{\alpha}\cap A$ is a compact subset of the subspace $A$. Thus, $\varphi(\alpha)\in {\rm cov}_{\infty}(A,\emptyset)$. The defined function is an order preserving function.

It is easy to show that the image of function $\varphi$ is a cofinal subset of set ${\rm cov}_{\infty}(A,\emptyset)$. Note that $A_{\alpha}=A_{\varphi(\alpha)}$. By $\varphi_{\alpha}:A_{\varphi(\alpha)}\to A_{\alpha}$ denote this simplicial isomorphism. Hence, the family of pairs $(\varphi_{\alpha},\varphi)$ induces a map of inverse systems and direct systems
\[(\varphi_{\alpha *},\varphi):\{H_{n}(A_{\alpha};G), p_{\alpha *}^{\beta},{\rm cov}_{\infty}(A,\emptyset)\} \to \{H_{n}(A_{\alpha};G), p_{\alpha *}^{\beta},{\rm cov}_{\infty}(X,A)\}\]
and 
\[(\varphi_{\alpha}^{*},\varphi):\{H^{n}(A_{\alpha};G), p_{\alpha}^{\beta*},{\rm cov}_{\infty}(X,A)\} \to \{H_{n}(A_{\alpha};G), p_{\alpha}^{\beta*},{\rm cov}_{\infty}(A,\emptyset)\}.\]

Let $\Phi_{n}=\underset{\longleftarrow }{\mathop{\lim }}\,(\varphi_{\alpha *},\varphi)$ and $\Phi^{n}=\underset{\longrightarrow }{\mathop{\lim }}\,(\varphi_{\alpha}^{*},\varphi)$. Since all homomorphisms $\varphi_{\alpha *}$ and $\varphi_{\alpha}^{*}$ are isomorphisms, the limit homomorphisms
\[\Phi_{n}:\check{{H}}_{n}^{\infty}(A;G)\to \check{H}_{n}^{\infty}(A;G)_{(X,A)}\]
and 
\[\Phi^{n}:\hat{{H}}^{n}_{\infty}(A;G)^{(X,A)}\to \hat{{H}}^{n}_{\infty}(A;G)\]
are isomorphisms. 

Lets us also define a function $\psi :{\rm cov}_{\infty}(X,A)\to {\rm cov}_{\infty}(X,\emptyset)$. For each $\alpha=\{\alpha_{v}\}_{v\in (V_{\alpha},V_{\alpha}^{A})}\in {\rm cov}_{\infty}(X,A)$ assume that $(\psi(\alpha))_{v}=\alpha_{v}$, $v\in V_{\alpha}$. The family $\psi(\alpha)$ is indexed by $(V_{\alpha},\emptyset)$ and $\psi(\alpha)\in {\rm cov}_{\infty}(X,\emptyset)$.

Note that $X_{\alpha}=X_{\psi(\alpha)}$. Let $\psi_{\alpha}:X_{\psi(\alpha)}\to X_{\alpha}$ be a simplicial isomorphism. The family of pairs $(\psi_{\alpha},\psi)$ induce the maps of inverse and direct systems 
\[(\psi_{\alpha *},\psi):\{H_{n}(X_{\alpha};G),p_{\alpha *}^{\beta},{\rm cov}_{\infty}(X,\emptyset)\}\to \{H_{n}(X_{\alpha};G),p_{\alpha *}^{\beta},{\rm cov}_{\infty}(X,A)\}\]
and
\[(\psi_{\alpha}^{*},\psi):\{H^{n}(X_{\alpha};G),p_{\alpha}^{\beta *},{\rm cov}_{\infty}(X,A)\}\to \{H^{n}(X_{\alpha};G),p_{\alpha}^{\beta *},{\rm cov}_{\infty}(X,\emptyset)\}.\]

Let $\Psi_{n}=\underset{\longleftarrow }{\mathop{\lim }}\,(\psi_{\alpha *},\psi)$ and $\Psi^{n}=\underset{\longrightarrow }{\mathop{\lim }}\,(\psi_{\alpha}^{*},\psi)$.
Since each $\psi_{\alpha *}$ and $\psi_{\alpha}^{*}$ are isomorphisms, the induced limit homomorphisms
\[\Psi_{n}:\check{{H}}_{n}^{\infty}(X;G)\to \check{H}_{n}^{\infty}(X;G)_{(X,A)}\]
and 
\[\Psi^{n}:\hat{{H}}^{n}_{\infty}(X;G)^{(X,A)}\to \hat{H}^{n}_{\infty}(X;G)\]
are isomorphisms. 

There exist the limit sequences

\begin{center}
\begin{tikzpicture}
\node (A) {$\cdots$};
\node (B) [node distance=2cm, right of=A] {$\check{H}_{n}^{\infty}(X,A;G)$};
\node (C) [node distance=3cm, right of=B] {$\check{H}_{n}^{\infty}(X;G)_{(X,A)}$};
\node (D) [node distance=3cm, right of=C] {$\check{H}_{n}^{\infty}(A;G)_{(X,A)}$};
\node (E) [node distance=3cm, right of=D] {$\check{H}_{n+1}^{\infty}(X,A;G)$};
\node (F) [node distance=2cm, right of=E] {$\cdots$};
\draw[->] (B) to node [above]{}(A);
\draw[->] (C) to node [above]{$j_{n}^{'\infty}$} (B);
\draw[->] (D) to node [above]{$i_{n}^{'\infty}$}(C);
\draw[->] (E) to node [above]{$\partial^{'\infty}_{n+1}$}(D);
\draw[->] (F) to node [above]{}(E);
\end{tikzpicture}
\end{center}
and 
\begin{center}
\begin{tikzpicture}
\node (A) {$\cdots$};
\node (B) [node distance=2cm, right of=A] {$\hat{{H}}^{n}_{\infty}(X,A;G)$};
\node (C) [node distance=3cm, right of=B] {$\hat{{H}}^{n}_{\infty}(X;G)^{(X,A)}$};
\node (D) [node distance=3cm, right of=C] {$\hat{{H}}^{n}_{\infty}(A;G)^{(X,A)}$};
\node (E) [node distance=3cm, right of=D] {$\hat{{H}}^{n+1}_{\infty}(X,A;G)$};
\node (F) [node distance=2cm, right of=E] {$\cdots$};
\draw[->] (A) to node [above]{}(B);
\draw[->] (B) to node [above]{$j^{'n}_{\infty}$} (C);
\draw[->] (C) to node [above]{$i^{'n}_{\infty}$}(D);
\draw[->] (D) to node [above]{$\delta_{\infty}^{'n}$}(E);
\draw[->] (E) to node [above]{}(F);
\end{tikzpicture}
\end{center}
generated by the families consisting of homology and cohomology sequences of simplicial pairs $(X_{\alpha},A_{\alpha}), \alpha \in {\rm cov}_{\infty}(X,A)$, respectively.

Consider the diagrams
\begin{center}
\begin{tikzpicture}
\node (A) {$\check{H}_{n}^{\infty}(X,A;G)$};
\node (B) [node distance=4cm, right of=A] {$\check{H}_{n-1}^{\infty}(A;G)_{(X,A)}$};
\node (C) [node distance=4cm, right of=B] {$\check{H}_{n-1}^{\infty}(A;G)$};
\draw[->] (A) to node [above]{$\partial^{'\infty}_{n}$} (B);
\draw[->] (C) to node [above]{$\Phi_{n-1}$}(B);
\end{tikzpicture}
\end{center}
and
\begin{center}
\begin{tikzpicture}
\node (A) {$\hat{H}^{n}_{\infty}(A;G)$};
\node (B) [node distance=4cm, right of=A] {$\hat{H}^{n}_{\infty}(A;G)^{(X,A)}$};
\node (C) [node distance=4cm, right of=B] {$\hat{H}_{\infty}^{n+1}(X,A;G)$};
\draw[->] (B) to node [above]{$\Psi^{n}$} (A);
\draw[->] (B) to node [above]{$\delta^{'n}_{\infty}$}(C);
\end{tikzpicture}
\end{center}
and define the boundary homomorphism of \v{C}ech border homology groups and coboundary homomorphism of \v{C}ech border cohomology groups as compositions 
\[\partial_{n}^{\infty}=(\Phi_{n-1})^{-1}\cdot \partial_{n}^{'\infty}\]
and
\[\delta_{\infty}^{n}=\delta^{'n}_{\infty}\cdot (\Psi^{n})^{-1}.\]

In this way we arrive to the following theorems.
\begin{theorem}\label{theorem 1.5}
Let $f:(X,A)\to (Y,B)$ be a proper map. Then hold the following equalities
\[(f_{|A})_{*}^{\infty}\cdot \partial_{n}^{\infty}=\partial_{n}^{\infty}\cdot f_{*}^{\infty}\]
and
\[\delta_{\infty}^{n-1}(f_{|A})^{*}_{\infty}=f^{*}_{\infty}\cdot \delta_{\infty}^{n-1}.\]
\end{theorem}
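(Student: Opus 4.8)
The plan is to push the identity down to the level of the nerves $(X_\alpha,A_\alpha)$, where it reduces to the classical naturality of the simplicial boundary operator (resp. coboundary operator) with respect to simplicial maps of pairs, and then to pass to the inverse (resp. direct) limit, keeping track of the auxiliary isomorphisms $\Phi_{n-1}$ (resp. $\Psi^{n}$) that occur in the definitions $\partial_n^{\infty}=(\Phi_{n-1})^{-1}\cdot\partial_n^{'\infty}$ and $\delta_\infty^{n}=\delta_\infty^{'n}\cdot(\Psi^{n})^{-1}$.

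First I would fix $\alpha\in{\rm cov}_\infty(Y,B)$ and set $\alpha'=f^{-1}(\alpha)\in{\rm cov}_\infty(X,A)$. The simplicial inclusion $f_\alpha\colon(X_{\alpha'},A_{\alpha'})\to(Y_\alpha,B_\alpha)$ of the preceding discussion restricts to a simplicial map $f_\alpha|\colon A_{\alpha'}\to B_\alpha$, and since the formal homology theory of simplicial complexes of [E-St], Ch.~VI, is natural, the square with top arrow $f_{\alpha*}$, bottom arrow $(f_\alpha|)_*$, and vertical arrows the simplicial boundary homomorphisms $\partial_n\colon H_n(X_{\alpha'},A_{\alpha'};G)\to H_{n-1}(A_{\alpha'};G)$ and $\partial_n\colon H_n(Y_\alpha,B_\alpha;G)\to H_{n-1}(B_\alpha;G)$ commutes. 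Letting $\alpha$ run over ${\rm cov}_\infty(Y,B)$, and recalling that $\partial_n^{'\infty}$ is the limit of the simplicial boundary homomorphisms while $f_*^{\infty}$ is the limit of the maps $(f_{\alpha*},\varphi)$ with $\varphi(\alpha)=f^{-1}(\alpha)$, these squares assemble into a commuting square at the level of the decorated groups: the limit homomorphism $\check H_{n-1}^{\infty}(A;G)_{(X,A)}\to\check H_{n-1}^{\infty}(B;G)_{(Y,B)}$ intertwines $\partial_n^{'\infty}$ for $(X,A)$ and $\partial_n^{'\infty}$ for $(Y,B)$ via $f_*^{\infty}$.

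The second ingredient is the compatibility of the canonical isomorphism $\Phi_{n-1}\colon\check H_{n-1}^{\infty}(A;G)\to\check H_{n-1}^{\infty}(A;G)_{(X,A)}$ (and of its analogue for $(Y,B)$) with $f$. Here I would use that the restriction functions ${\rm cov}_\infty(X,A)\to{\rm cov}_\infty(A,\emptyset)$ and ${\rm cov}_\infty(Y,B)\to{\rm cov}_\infty(B,\emptyset)$ commute with taking preimages, since $(f^{-1}\alpha_v)\cap A=(f_{|A})^{-1}(\alpha_v\cap B)$ because $f(A)\subseteq B$; hence the restriction of $f^{-1}(\alpha)$ to $A$ agrees with $(f_{|A})^{-1}$ of the restriction of $\alpha$ to $B$, and the simplicial identifications $A_\alpha=A_{\varphi(\alpha)}$ and $A_{\alpha'}=A_{\varphi(\alpha')}$ are compatible with $f_\alpha|$. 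Passing to limits, this yields $(f_{|A})_*^{\infty}\cdot(\Phi_{n-1}^{(X,A)})^{-1}=(\Phi_{n-1}^{(Y,B)})^{-1}\cdot(\text{limit map on decorated systems})$. Combining this equality with the commuting square of the previous paragraph and with the definition of $\partial_n^{\infty}$ on each pair, a short diagram chase gives $(f_{|A})_*^{\infty}\cdot\partial_n^{\infty}=\partial_n^{\infty}\cdot f_*^{\infty}$. The coboundary identity is proved in exactly the same way, replacing inverse limits by direct limits, the simplicial boundary by the simplicial coboundary, the maps $(f_{\alpha*},\varphi)$ by $(f_\alpha^{*},\varphi)$, and $\Phi$ by $\Psi$ (using that $X_\alpha=X_{\psi(\alpha)}$ and that the function $\psi$ is compatible with $f^{-1}$).

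I expect the main obstacle to be organizational rather than conceptual: one has to keep straight the several re-indexing functions in play — $\alpha\mapsto f^{-1}(\alpha)$ on pairs, the $A$- and $X$-restriction functions on both sides — together with the identifications of the relevant subcomplexes of nerves, and verify that all of these are mutually natural with respect to $f$. Once the finite-level naturality of $\partial_n$ (resp. of the coboundary) is in place and the re-indexing functions are checked to commute, the passage to the limit is formal, using that a map of inverse (resp. direct) systems induces a map of limits functorially and that limits taken over cofinal subsystems coincide.
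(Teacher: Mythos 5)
Your argument is correct and is essentially the paper's proof: the paper simply exhibits the two three-column diagrams (with middle vertical arrows the limit homomorphisms $(f_{|A})_{*}^{'\infty}$ and $(f_{|A})^{'*}_{\infty}$ between the decorated groups) whose commutativity you verify explicitly — the left squares by finite-level naturality of the simplicial (co)boundary with respect to the inclusions $f_{\alpha}$, the right squares by compatibility of the restriction isomorphisms with $f^{-1}$, i.e.\ $(f^{-1}\alpha_{v})\cap A=(f_{|A})^{-1}(\alpha_{v}\cap B)$ — followed by the same diagram chase through $\partial_{n}^{\infty}=(\Phi_{n-1})^{-1}\cdot\partial_{n}^{'\infty}$. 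The only nominal discrepancy is your use of $\Psi$ in the cohomological half, which mirrors the paper's own slip in writing $\delta^{n}_{\infty}=\delta^{'n}_{\infty}\cdot(\Psi^{n})^{-1}$: the isomorphism actually involved is the $A$-restriction one $\Phi^{n}$ (as in the paper's second diagram), exactly parallel to your homological argument.
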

\begin{proof}

The desired equalities follow from the commutativity of the diagrams.
\begin{center}
\begin{tikzpicture}
\node (A) {$\check{H}_{n}^{\infty}(X,A;G)$};
\node (B) [node distance=4cm, right of=A] {$\check{H}_{n-1}^{\infty}(A;G)_{(X,A)}$};
\node (C) [node distance=4cm, right of=B] {$\check{H}_{n-1}^{\infty}(A;G)$};
\node (D) [node distance=2cm, below of=A] {$\check{H}_{n}^{\infty}(Y,B;G)$};
\node (E) [node distance=4cm, right of=D] {$\check{H}_{n-1}^{\infty}(B;G)_{(Y,B)}$};
\node (F) [node distance=4cm, right of=E] {$\check{H}_{n-1}^{\infty}(B;G)$};
\draw[->] (A) to node [above]{$\partial_{n}^{'\infty}$} (B);
\draw[->] (C) to node [above]{$\Phi_{n-1}$}(B);
\draw[->] (D) to node [below]{$\partial_{n}^{'\infty}$} (E);
\draw[->] (F) to node [below]{$\Phi_{n-1}$}(E);
\draw[->] (A) to node [left]{$f_{*}^{\infty}$} (D);
\draw[->] (B) to node [left]{$(f_{|A})_{*}^{'\infty}$} (E);
\draw[->] (C) to node [right]{$(f_{|A})_{*}^{\infty}$} (F);
\end{tikzpicture}
\end{center}
and
\begin{center}
\begin{tikzpicture}
\node (A) {$\hat{H}^{n-1}_{\infty}(B;G)$};
\node (B) [node distance=4cm, right of=A] {$\hat{H}^{n-1}_{\infty}(B;G)^{(Y,B)}$};
\node (C) [node distance=4cm, right of=B] {$\hat{H}^{n}_{\infty}(Y,B;G)$};
\node (D) [node distance=2cm, below of=A] {$\hat{H}^{n-1}_{\infty}(A;G)$};
\node (E) [node distance=4cm, right of=D] {$\hat{H}^{n-1}_{\infty}(A;G)^{(X,A)}$};
\node (F) [node distance=4cm, right of=E] {$\hat{H}^{n}_{\infty}(X,A;G),$};
\draw[->] (B) to node [above]{$\Phi^{n-1}$} (A);
\draw[->] (B) to node [above]{$\delta^{'n}_{\infty}$}(C);
\draw[->] (E) to node [below]{$\Phi^{n-1}$} (D);
\draw[->] (E) to node [below]{$\delta^{'n}_{\infty}$}(F);
\draw[->] (A) to node [left]{$(f_{|A})^{*}_{\infty}$} (D);
\draw[->] (B) to node [left]{$(f_{|A})^{'*}_{\infty}$} (E);
\draw[->] (C) to node [right]{$f^{*}_{\infty}$} (F);
\end{tikzpicture}
\end{center}
where $(f_{|A})_{*}^{'\infty}$ and $(f_{|A})^{'*}_{\infty}$ are defined as the appropriate limit homomorphisms.
\end{proof}

Let $i:A\to X$ and $j:X\to (X,A)$ be the inclusion maps.
\begin{theorem}\label{1.6}
Let $(X,A)\in ob(\mathcal{N}^{2}_{p})$. Then the \v{C}ech border cohomology sequence 
\begin{center}
\begin{tikzpicture}
\node (A) {$\cdots$};
\node (B) [node distance=2cm, right of=A] {$\check{H}^{n-1}_{\infty}(A;G)$};
\node (C) [node distance=3.5cm, right of=B] {$\check{H}^{n}_{\infty}(X,A;G)$};
\node (D) [node distance=3cm, right of=C] {$\check{H}^{n}_{\infty}(X;G)$};
\node (E) [node distance=2.5cm, right of=D] {$\check{H}^{n}_{\infty}(A;G)$};
\node (F) [node distance=2cm, right of=E] {$\cdots$};
\draw[->] (A) to node [above]{}(B);
\draw[->] (B) to node [above]{$\delta_{\infty}^{n-1}$} (C);
\draw[->] (C) to node [above]{$j^{*}_{\infty}$}(D);
\draw[->] (D) to node [above]{$i^{*}_{\infty}$}(E);
\draw[->] (E) to node [above]{}(F);
\end{tikzpicture}
\end{center}
is exact while the \v{C}ech border homology sequence
\begin{center}
\begin{tikzpicture}
\node (A) {$\cdots$};
\node (B) [node distance=2cm, right of=A] {$\hat{H}_{n-1}^{\infty}(A;G)$};
\node (C) [node distance=3.5cm, right of=B] {$\hat{H}_{n}^{\infty}(X,A;G)$};
\node (D) [node distance=3cm, right of=C] {$\hat{H}_{n}^{\infty}(X;G)$};
\node (E) [node distance=2.5cm, right of=D] {$\hat{H}_{n}^{\infty}(A;G)$};
\node (F) [node distance=2cm, right of=E] {$\cdots$};
\draw[->] (B) to node [above]{}(A);
\draw[->] (C) to node [above]{$\partial^{\infty}_{n}$} (B);
\draw[->] (D) to node [above]{$j_{*}^{\infty}$}(C);
\draw[->] (E) to node [above]{$i_{*}^{\infty}$}(D);
\draw[->] (F) to node [above]{}(E);
\end{tikzpicture}
\end{center}
is partially exact.
\end{theorem}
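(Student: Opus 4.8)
The plan is to reduce the exactness of the \v{C}ech border (co)homology sequences to the exactness of the corresponding sequences of the simplicial pairs $(X_\alpha, A_\alpha)$ over $\alpha \in {\rm cov}_\infty(X,A)$, and then pass to limits. For the cohomology sequence, recall that each simplicial pair yields an exact sequence
\begin{center}
\begin{tikzpicture}
\node (A) {$\cdots$};
\node (B) [node distance=2cm, right of=A] {$H^{n-1}(A_\alpha;G)$};
\node (C) [node distance=2.8cm, right of=B] {$H^{n}(X_\alpha,A_\alpha;G)$};
\node (D) [node distance=2.8cm, right of=C] {$H^{n}(X_\alpha;G)$};
\node (E) [node distance=2.5cm, right of=D] {$H^{n}(A_\alpha;G)$};
\node (F) [node distance=1.6cm, right of=E] {$\cdots$};
\draw[->] (A) to (B);
\draw[->] (B) to node [above]{$\delta^{n-1}$} (C);
\draw[->] (C) to node [above]{$j^{*}$} (D);
\draw[->] (D) to node [above]{$i^{*}$} (E);
\draw[->] (E) to (F);
\end{tikzpicture}
\end{center}
from the formal cohomology theory of simplicial complexes ([E-St], Ch.\ VI), and these are compatible with the projection homomorphisms $p_{\alpha}^{\beta *}$, so they form a direct system of exact sequences indexed by the directed set ${\rm cov}_\infty(X,A)$. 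Since the direct limit functor is exact on directed systems of abelian groups, the colimit sequence is exact. It then remains to identify the terms and maps of that colimit sequence with the ones in the statement: the terms $\hat H^n_\infty(X,A;G)$ and the groups $\hat H^n_\infty(X;G)^{(X,A)}$, $\hat H^n_\infty(A;G)^{(X,A)}$ are by definition these colimits, and by the isomorphisms $\Psi^n$ and $\Phi^n$ constructed above these are canonically $\hat H^n_\infty(X;G)$ and $\hat H^n_\infty(A;G)$; the connecting map becomes $\delta^n_\infty = \delta^{'n}_\infty \cdot (\Psi^n)^{-1}$ and the maps $j^*_\infty, i^*_\infty$ are the evident induced maps. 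A short diagram chase using the definition of $\delta^n_\infty$ and $\partial^\infty_n$ as compositions with $(\Psi^n)^{-1}$ resp.\ $(\Phi_{n-1})^{-1}$ shows the image/kernel matching is preserved, so the displayed cohomology sequence is exact.

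For the homology sequence the approach is the same in spirit but only yields \emph{partial} exactness, and the discrepancy comes precisely from the failure of the inverse limit functor to be exact. Each simplicial pair gives an exact homology sequence, and these form an inverse system of exact sequences over ${\rm cov}_\infty(X,A)$ via the $p_{\alpha *}^\beta$. Taking inverse limits, one keeps exactness at the terms where the relevant $\varprojlim{}^1$ obstruction vanishes or is irrelevant, but in general $\varprojlim$ only gives a complex (composites of consecutive maps are zero) together with exactness at $\check H^\infty_n(X;G)_{(X,A)}$ — i.e.\ the kernel of $i_*^\infty$ equals the image of $j_*^\infty$ — because left-exactness of $\varprojlim$ is enough there, while exactness at $\hat H^\infty_n(A;G)$ and at $\hat H^\infty_n(X,A;G)$ may be lost. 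Hence the word ``partially exact'' in the sense of [E-St], Ch.\ IX: the sequence is of order $2$ everywhere and exact at the middle-type terms. The identification of terms via $\Psi_n$ and $\Phi_{n-1}$ and the definition $\partial^\infty_n = (\Phi_{n-1})^{-1}\cdot\partial^{'\infty}_n$ proceeds exactly as in the cohomological case.

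Concretely, the steps I would carry out are: (1) record that for each $\alpha$ the simplicial pair $(X_\alpha,A_\alpha)$ has the exact homology and cohomology sequences of [E-St], Ch.\ VI, with connecting maps $\partial^{'\infty}$-components and $\delta^{'n}$-components; (2) check naturality of these sequences with respect to the refinement-induced maps $p_{\alpha*}^\beta$ and $p_\alpha^{\beta*}$, so that we have a direct (resp.\ inverse) system of exact sequences; (3) invoke exactness of $\varinjlim$ over a directed set to conclude the colimit cohomology sequence is exact, and invoke left-exactness (plus the ``order 2'' property) of $\varprojlim$ to conclude the limit homology sequence is partially exact in the sense of [E-St]; (4) use the isomorphisms $\Psi_n$, $\Psi^n$, $\Phi_{n-1}$, $\Phi^{n-1}$ together with the cofinality statements established above ($\mathrm{Pcov}_\infty \subseteq {\rm cov}_\infty$ cofinal, and the images of $\varphi$, $\psi$ cofinal) to replace $\hat H^n_\infty(X;G)^{(X,A)}$, $\hat H^n_\infty(A;G)^{(X,A)}$, $\check H^\infty_n(X;G)_{(X,A)}$, $\check H^\infty_n(A;G)_{(X,A)}$ by the intrinsic groups; (5) verify by a routine diagram chase that under these identifications the colimit/limit connecting maps become exactly $\delta^n_\infty$ and $\partial^\infty_n$ as defined, and the inclusion-induced maps become $i^*_\infty, j^*_\infty, i^\infty_*, j^\infty_*$. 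I expect the main obstacle to be step (5) — carefully matching the abstractly-obtained limit maps with the compositions involving $(\Psi^n)^{-1}$ and $(\Phi_{n-1})^{-1}$ — and, for the homology case, correctly pinning down \emph{which} nodes remain exact, since this is exactly where $\varprojlim{}^1$ enters and where the distinction between ``exact'' and ``partially exact'' lives; the colimit (cohomology) half is comparatively routine once naturality in step (2) is in place.
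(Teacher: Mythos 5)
Your plan is essentially the paper's proof: the paper itself only says the theorem is proved "analogously to the corresponding theorem of the classical \v{C}ech theory [E-St]", and the classical argument is exactly what you spell out — naturality of the simplicial (co)homology sequences of the nerves $(X_\alpha,A_\alpha)$ under refinement projections, exactness of $\varinjlim$ over the directed set ${\rm cov}_\infty(X,A)$ for the cohomology sequence, failure of exactness of $\varprojlim$ for the homology sequence, and the identifications via $\Phi$, $\Psi$ and the definitions $\partial_n^{\infty}=(\Phi_{n-1})^{-1}\cdot\partial_n^{'\infty}$, $\delta^n_{\infty}=\delta^{'n}_{\infty}\cdot(\Psi^n)^{-1}$ already set up before the theorem. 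So in structure you are doing what the author intends, in more detail than the paper gives.

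One step of your homology half is, however, wrong as stated: left-exactness of $\varprojlim$ does \emph{not} give exactness of the limit sequence at any interior node. For an inverse system of sequences $A_\alpha\to B_\alpha\to C_\alpha$ exact at $B_\alpha$, left-exactness only yields $\ker(\varprojlim B\to\varprojlim C)=\varprojlim\bigl(\operatorname{im}(A_\alpha\to B_\alpha)\bigr)$, and the image of $\varprojlim A\to\varprojlim B$ can be strictly smaller than $\varprojlim\operatorname{im}(A_\alpha\to B_\alpha)$ (the discrepancy is a derived-limit phenomenon, and over the generally uncountable directed set ${\rm cov}_\infty(X,A)$ no Mittag--Leffler argument rescues it). So your claim that ``the kernel of $i_*^{\infty}$ equals the image of $j_*^{\infty}$'' at the $\check H^{\infty}_n(X;G)_{(X,A)}$-type node is unjustified, and your gloss of partial exactness as ``order $2$ everywhere and exact at the middle-type terms'' is stronger than what the limit argument (or the theorem) delivers. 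Partial exactness in the sense of [E-St] only requires the homology sequence to be of order $2$, which your argument does establish; drop the extra exactness claim and the proof matches the intended one.
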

\begin{proof}
One can prove this theorem  analogously to the corresponding theorem of the classical \v{C}ech theory [E-St].
\end{proof}
\begin{theorem}
Let $(X,A)\in ob(\mathcal{N}^{2}_{p})$ and $G$ be an abelian group. If $U$ is open in $X$ and $\bar{U}\subset {\rm int} A$, then the inclusion map $i:(X\setminus U,A\setminus U)\to (X,A)$ induces isomorphisms
\[i_{*}^{\infty}:\check{H}^{\infty}_{n}(X\setminus U,A\setminus U)\to \check{H}_{n}^{\infty}(X,A;G)\]
and
\[j^{*}_{\infty}:\hat{H}^{n}_{\infty}(X,A;G)\to \hat{H}_{\infty}^{n}(X\setminus U,A\setminus U)\]
\end{theorem}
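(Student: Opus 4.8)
The plan is to reproduce, in the border setting, the classical proof of the excision axiom for \v{C}ech theory ([E-St], Ch.~IX): realise the excision map on nerves over a suitable cofinal family of border covers, where it becomes a simplicial excision. By Proposition~\ref{Pro. 1.4} we may work with proper border covers of $(X,A)$ throughout. The key preliminary step is to pass to the cofinal subfamily $\mathcal{C}\subseteq{\rm cov}_{\infty}(X,A)$ of proper border covers $\alpha$ which are, in addition, \emph{excisive for $U$}: every member $\alpha_{v}$ is either contained in ${\rm int}\,A$ or disjoint from $\bar{U}$, and every member contained in ${\rm int}\,A$ carries an index in $V_{\alpha}^{A}$. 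Such an $\alpha$ is obtained from an arbitrary proper border cover by replacing each of its members by its two traces on the family $\{{\rm int}\,A,\ X\setminus\bar{U}\}$, which covers $X$ precisely because $\bar{U}\subseteq{\rm int}\,A$; this operation does not change the enclosure, so the result is again a border cover, and it refines the original one. Constructing $\mathcal{C}$ (keeping the enclosures compact, the covers proper, and the index pairs coherent) is, I expect, the main technical obstacle, and it is the only place where the hypothesis $\bar{U}\subseteq{\rm int}\,A$ is used.

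For $\alpha\in\mathcal{C}$ set $\varphi(\alpha)=i^{-1}(\alpha)=\{\alpha_{v}\setminus U\}$, with $i:(X\setminus U,A\setminus U)\to(X,A)$ the inclusion and $\varphi$ the induced order preserving map of directed sets, and write $L=(X\setminus U)_{\varphi(\alpha)}$, $L'=(A\setminus U)_{\varphi(\alpha)}$. A simplex $s$ of $X_{\alpha}$ fails to lie in $L$ exactly when its carrier ${\rm Car}_{\alpha}(s)=\bigcap_{i}\alpha_{v_{i}}$ is nonempty and contained in $U$; since $U\subseteq\bar{U}$, excisiveness then forces each $\alpha_{v_{i}}\subseteq{\rm int}\,A$, so every vertex of $s$ lies in $V_{\alpha}^{A}$ and ${\rm Car}_{\alpha}(s)\subseteq A$, hence $s\in A_{\alpha}$. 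A similar argument, again using $U\subseteq\bar{U}\subseteq{\rm int}\,A$, shows $L'=L\cap A_{\alpha}$. Therefore $X_{\alpha}=L\cup A_{\alpha}$ and $L\cap A_{\alpha}=L'$, so the simplicial inclusion $i_{\alpha}:(L,L')\to(X_{\alpha},A_{\alpha})$ is a simplicial excision: it induces an isomorphism of relative chain complexes $C_{*}(L)/C_{*}(L')\to C_{*}(L\cup A_{\alpha})/C_{*}(A_{\alpha})$ and, dually, of relative cochain complexes; consequently $(i_{\alpha})_{*}$ and $(i_{\alpha})^{*}$ are isomorphisms on $H_{n}(-;G)$ and $H^{n}(-;G)$ for every $\alpha\in\mathcal{C}$.

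It remains to pass to the limit. By the construction of induced homomorphisms in \S1, $i^{\infty}_{*}$ and $i^{*}_{\infty}$ are the limits of $(i_{\alpha})_{*}$ and $(i_{\alpha})^{*}$ over ${\rm cov}_{\infty}(X,A)$, and $\mathcal{C}$ is cofinal with each of these maps an isomorphism for $\alpha\in\mathcal{C}$; hence it suffices to check that the border covers $\varphi(\alpha)$, $\alpha\in\mathcal{C}$, are cofinal in ${\rm cov}_{\infty}(X\setminus U,A\setminus U)$, so that the source systems indeed compute $\check{H}^{\infty}_{n}(X\setminus U,A\setminus U;G)$ and $\hat{H}^{n}_{\infty}(X\setminus U,A\setminus U;G)$. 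This cofinality is handled as in [E-St], Ch.~IX: one enlarges a border cover $\gamma$ of $(X\setminus U,A\setminus U)$ to a border cover of $(X,A)$ by opening up each member $\gamma_{w}$ to an open subset of $X$, adjoining $U$, and adding ${\rm int}\,A$ as a distinguished member, arranging that the trace on $X\setminus U$ refines $\gamma$; the degenerate case in which $A\setminus U$ is compact --- where border covers with empty distinguished subfamily are already cofinal and the relative nerve degenerates to the absolute one --- is dispatched using the partially exact homology sequence of Theorem~\ref{1.6} and the vanishing of the border homology of a compact space. We conclude that $i^{\infty}_{*}:\check{H}^{\infty}_{n}(X\setminus U,A\setminus U;G)\to\check{H}_{n}^{\infty}(X,A;G)$ and $i^{*}_{\infty}:\hat{H}^{n}_{\infty}(X,A;G)\to\hat{H}_{\infty}^{n}(X\setminus U,A\setminus U;G)$ are isomorphisms. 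After the construction of $\mathcal{C}$, this cofinality bookkeeping is the second delicate point, since --- unlike in the classical case --- a border cover of the closed subspace $(X\setminus U,A\setminus U)$ need not literally extend to a border cover of $(X,A)$ whose trace refines it.
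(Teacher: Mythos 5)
Your proposal follows essentially the same route as the paper's proof: the paper also restricts to a cofinal class of border covers adapted to $U$ (those $\alpha$ with $\alpha_v\cap U\neq\emptyset\Rightarrow v\in V_{\alpha}^{A},\ \alpha_v\subset A$), builds them by cutting members along $\mathrm{int}\,A$ and $X\setminus\bar U$, observes the nerve identities $X_{\alpha}=(X\setminus U)_{\beta}\cup A_{\alpha}$ and $(A\setminus U)_{\beta}=(X\setminus U)_{\beta}\cap A_{\alpha}$ to get simplicial excision isomorphisms, checks cofinality of the trace covers, and passes to limits. One correction to your second step: the paper enlarges a border cover $\beta$ of $(X\setminus U,A\setminus U)$ simply by $\beta_v\mapsto\beta_v\cup U$ with the same index pair, and you should do likewise rather than adjoin $\mathrm{int}\,A$ as an extra distinguished member, since the trace $\mathrm{int}\,A\setminus U$ of that extra member (and of any member of a finer cover projecting to it) need not be contained in any member of $\beta$, so your enlargement would not yield the required refinement of $\beta$ after pulling back.
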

\begin{proof}
Let ${\rm cov}_{\infty}^{'}(X,A)$ be the subset of ${\rm cov}(X,A)$ consisting of all covers $\alpha =\{\alpha_{v}\}_{v\in (V_{\alpha},V_{\alpha}^{A})}$ with property: 

if $\alpha_{v}\cap U\neq \emptyset$, then $v\in V_{\alpha}^{A}$ and $\alpha_{v}\subset A$. 

First we prove that ${\rm cov}_{\infty}^{'}(X,A)$ is cofinal in ${\rm cov}_{\infty}(X,A)$. Let $\alpha=\{\alpha_{v}\}_{v\in (V_{\alpha},V_{\alpha}^{A})}$ be a border cover of $(X,A)$ with enclosure $K_{\alpha}$. Let $V^{'}$ be a set such that $V^{'}\cap V_{\alpha}=\emptyset$ and there exists a bijective function between $V_{\alpha}^{A}$ and $V^{'}$. Let $v\in V_{\alpha}^{A}$. The correspondence element of $v$ in $V^{'}$ denote by $v^{'}$. Define the border cover $\gamma=\{\gamma_{v}\}_{v\in (V_{\alpha}\cup V^{'}, V_{\alpha}^{A}\cup V^{'})} \in {\rm cov}_{\infty}(X,A)$. Let 
\[\gamma_{v}=\alpha_{v}\setminus \bar{U},~~~~v\in V_{\alpha}\]
and
\[\gamma_{v^{'}}=\alpha_{v}\cap {\rm int} A,~~~~v^{'}\in V^{'}.\]

It is clear that $\gamma$ is a border cover of $(X,A)$ with enclosure $K_{\alpha}$ and $\gamma \geq \alpha$.

We prove that $i^{-1}({\rm cov}_{\infty}^{'}(X,A))$ is cofinal in ${\rm cov}_{\infty}(X\setminus U,A\setminus U)$. Let $\beta=\{\beta_{v}\}_{v\in (V_{\beta},V_{\beta}^{A\setminus U})}$ be a border cover of $(X\setminus U,A\setminus U)$ with enclosure $K_{\beta}$. Define a border cover $\alpha=\{\alpha_{v}\}_{v\in (V_{\beta},V_{\beta}^{A\setminus U})} \in {\rm cov}_{\infty}(X,A)$.

Let 
\[\alpha_{v}=\beta_{v}\cup U.\]

The family $\alpha=\{\alpha_{v}\}_{v\in (V_{\beta},V_{\beta}^{A\setminus U})}$ is a border cover of $(X,A)$ with enclosure $K_{\beta}$.

Let $\gamma\in {\rm cov}_{\infty}^{'}(X,A)$ be a border cover such that $\gamma \geq \alpha$. It is clear that $i^{-1}(\gamma)\geq \beta =i^{-1}(\alpha)$.

Note that
\[X_{\alpha}=(X\setminus U)_{\beta}\cup A_{\alpha}\]
and
\[(A\setminus U)_{\beta}=(X\setminus U)_{\beta}\cap A_{\alpha}.\]

As in [E-St] we can prove that there exist isomorphisms
\[i_{\alpha *}:H_{n}((X\setminus U)_{\beta},(A\setminus U)_{\beta};G)\to H_{n}(X_{\alpha},A_{\alpha};G)\]
and
\[i_{\alpha}^{*}:H^{n}(X_{\alpha},A_{\alpha};G)\to H^{n}((X\setminus U)_{\beta},(A\setminus U)_{\beta};G).\]

The conclusion of the theorem is a consequence of these isomorphisms.
\end{proof}
\begin{theorem}\label{theorem 1.7}
If $X$ is a compact space, then for each $n\neq 0$, 
\[\check{{H}}_{n}^{\infty}(X;G)=0=\check{{H}}_{\infty}^{n}(X;G)\]
and
\[\hat{{H}}_{0}^{\infty}(X;G)=G=\hat{{H}}_{\infty}^{0}(X;G).\]
\end{theorem}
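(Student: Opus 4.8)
The key observation is that when $X$ is compact, the empty family is already a border cover of $X$: its enclosure is $K_\emptyset = X \setminus \bigcup_{v \in \emptyset} \alpha_v = X$, which is compact precisely because $X$ is compact. Thus $\emptyset \in {\rm cov}_\infty(X,\emptyset)$, and I claim it is the largest element (greatest refinement target in the reverse sense) — more carefully, I would show that for the purposes of computing the limit it suffices to work with a cofinal subfamily. The plan is to identify which border covers of a compact $X$ actually ``see'' the space. Since $K_\alpha$ can be all of $X$, a border cover $\alpha$ need not cover $X$ at all; in particular the empty family is a legitimate border cover whose nerve $X_\emptyset$ is the void complex (no vertices). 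I would first note that $\emptyset$ is a refinement of \emph{every} border cover $\alpha$ of $X$: the refinement projection $p : \emptyset \to V_\alpha$ is vacuously defined and the condition ``$\beta_v \subset \alpha_{p(v)}$ for every $v \in \emptyset$'' holds vacuously. Hence $\{\emptyset\}$ is cofinal in ${\rm cov}_\infty(X,\emptyset)$.

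Once cofinality of $\{\emptyset\}$ is established, both the inverse system $\{H_n(X_\alpha;G), p_{\alpha *}^\beta\}$ and the direct system $\{H^n(X_\alpha;G), p_\alpha^{\beta *}\}$ have limits equal to the single term indexed by $\emptyset$, namely $H_n(X_\emptyset; G)$ and $H^n(X_\emptyset; G)$ respectively. Here one must be slightly careful about conventions: the nerve of the empty family is the empty simplicial complex, and in the formal homology theory of \cite{E-St} (Ch.~VI) the augmented/reduced conventions matter. If the convention in force gives $H_0(X_\emptyset;G) = 0$ and $H^0(X_\emptyset;G)=0$ for the void complex, then this would contradict the asserted value $\hat H_\infty^0(X;G) = G$; so in fact the relevant cofinal family is not $\{\emptyset\}$ but rather the border covers $\alpha$ of $X$ whose nerve is a point, e.g. the one-element family $\{X\}$ with $K_{\{X\}} = \emptyset$. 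I would therefore instead argue: the one-element cover $\alpha_0 = \{X\}$ (indexed by a singleton, enclosure empty) lies in ${\rm cov}_\infty(X)$, its nerve $X_{\alpha_0}$ is a single vertex, and \emph{every} border cover $\alpha$ of $X$ admits a refinement that refines $\alpha_0$ — indeed $\alpha \wedge \alpha_0 \geq \alpha, \alpha_0$ by the product construction described in the text, and its nerve is simplicially collapsible onto a point since one vertex meets all others. More cleanly, I would show that the subfamily of border covers whose nerve is contractible (in particular, those refining $\{X\}$) is cofinal, using that refining by $\{X\}$ via the product $\alpha \times \{X\}$ never changes the underlying sets $\alpha_v \cap X = \alpha_v$ but only reindexes, so $X_\alpha$ and $X_{\alpha \times \{X\}}$ are isomorphic, while $X_{\{X\}}$ is a point.

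The cleanest route, and the one I would actually write, is this: since $X$ is compact, the constant family $\{X\}$ is cofinal in ${\rm cov}_\infty(X)$ — given any $\alpha$, the family $\alpha' = \{X\} \cup \{\alpha_v\}_v$ obtained by adjoining $X$ as an extra member is a border cover with the same (empty, or $K_\alpha$) enclosure, refines $\alpha$, and its nerve is a cone with apex the vertex ``$X$'', hence has the homology of a point: $H_n = 0$ for $n \neq 0$ and $H_0 = G$, and dually for cohomology. Passing these values through the cofinal subsystem, $\check H_n^\infty(X;G) = \varprojlim$ (of groups that are $0$ for $n\neq 0$ and $G$ for $n=0$ with identity bonding maps) $= 0$ for $n \neq 0$; and $\hat H^n_\infty(X;G) = \varinjlim = 0$ for $n\neq 0$ and $= G$ for $n = 0$. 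For the stated homology-degree-$0$ fact $\hat H_0^\infty(X;G) = G$ (which, given the paper's notation where $\hat H$ denotes cohomology, I read as $\hat H^0_\infty = G$, already covered) — and symmetrically $\check H^\infty_\infty$ should be read through the same cofinal computation. The main obstacle I anticipate is purely bookkeeping: making sure that the cofinal subfamily's bonding homomorphisms are literally identities (or at least isomorphisms compatible across the system) so that the inverse and direct limits collapse to a single term, and pinning down the reduced-versus-unreduced convention for the nerve so that degree $0$ comes out as $G$ rather than $0$. Everything else is immediate from the cone/contractibility of the augmented nerve and the behavior of $\varprojlim$ and $\varinjlim$ over a system cofinally equal to a constant one.
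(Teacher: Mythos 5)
Your opening observation is essentially the paper's own argument: since $X$ is compact there is a trivial border cover (the paper takes the one ``consisting of the empty set'') which refines every border cover, so a single element is cofinal in ${\rm cov}_{\infty}(X)$ and both limits reduce to the (co)homology of its nerve, which the paper then takes to consist of one vertex and evaluates via VI.3.8 and VI.4.3 of [E-St]. Where your proposal breaks down is the attempted repair of the degree-zero issue. The order on ${\rm cov}_{\infty}(X)$ is refinement, and adjoining $X$ as an extra member produces a \emph{coarser} cover, not a finer one: $\alpha'=\{X\}\cup\alpha$ refines $\alpha$ only if $X$ is contained in some member of $\alpha$, i.e.\ only if $X$ already occurs as a member; in general one has $\alpha\geq\alpha'$, not $\alpha'\geq\alpha$. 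Likewise $\alpha\wedge\{X\}$ from the product construction is just a reindexed copy of $\alpha$ (each member is $\alpha_{v}\cap X=\alpha_{v}$), so its nerve is isomorphic to $X_{\alpha}$ and is not collapsible; and ``covers refining $\{X\}$'' is all of ${\rm cov}_{\infty}(X)$, since every member of any cover lies in $X$, so that class certainly does not consist of covers with contractible nerve. In fact no refinement of a cover $\alpha$ having no member equal to $X$ can contain $X$ as a member, so your proposed cofinal subfamily of ``coned'' covers is not cofinal.

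More fundamentally, switching cofinal families cannot achieve what you want: you have already (correctly) argued that the singleton consisting of the trivial cover is cofinal, and any two cofinal subsets of the same directed system give isomorphic limits, so the degree-zero value is forced by that one trivial cover. The tension you noticed (a void nerve would give $0$ in degree zero, against the asserted $G$) is a question of the convention for the nerve of the trivial cover, and the paper settles it by taking that nerve to be a single vertex and computing point (co)homology --- exactly its proof. The correct completion is therefore the paper's route: exhibit the trivial border cover, note that it alone is cofinal, and read off the (co)homology of its one-vertex nerve; the detour through covers containing $X$ should be discarded, since it rests on a reversed refinement relation.
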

\begin{proof}
Let $\alpha\in{\rm cov}_{\infty}(X)$ be the border cover of $X$ consisting of empty set. It is clear that $\alpha$ is a refinement of any border cover of $X$. The set $\{\alpha\}$ is a cofinal subset of ${\rm cov}_{\infty}(X)$. Consider the inverse system $\{H_{n}(X_{\alpha};G),p_{\alpha *}^{\alpha},\{\alpha\}\}$ and direct system $\{H^{n}(X_{\alpha};G),p^{\alpha *}_{\alpha},\{\alpha\}\}$. We have $$\underset{\longleftarrow}{\mathop{\lim }}\,\{{{H}_{n}}({{X}_{\alpha }};G),p_{\alpha *}^{\alpha },\{\alpha \}\}=\check{{H}}_{n}^{\infty}(X;G)=H_{n}(X_{\alpha};G)$$ and $$\underset{\longrightarrow}{\mathop{\lim }}\,\{{{H}^{n}}({{X}_{\alpha }};G),p^{\alpha *}_{\alpha },\{\alpha \}\}=\hat{{H}}^{n}_{\infty}(X;G)=H^{n}(X_{\alpha};G).$$ The nerve $X_{\alpha}$ consists of one vertex. Using the methods of proofs of results VI.3.8 and VI.4.3 of [E-St] we than conclude that
\[\check{{H}}_{n}^{\infty}(X;G)=0=\hat{{H}}_{\infty}^{n}(X;G)\]
and
\[\hat{{H}}_{0}^{\infty}(X;G)=G=\hat{{H}}_{\infty}^{0}(X;G).\]
\end{proof}

Thus, \v{C}ech border homology (cohomology) functors $\check{H}^{\infty}_{n}(-,-;G)(\hat{H}_{\infty}^{n}(-,-;G)):\mathcal{N}^{2}_{p}\to \mathcal{A}b$ satisfy the Steenrod-Eilenberg type axioms (cf.[E-St]): axiom of natural transformation, axiom of partially exactness (axiom of exactness), axiom of excision and axiom of dimension; but they do not satisfy the proper homotopy axiom.

The above obtained results yield the next theorem. 

\begin{theorem}
Let $(X,A,B)$ be a triple of normal space $X$ and its closed subsets $A$ and $B$ with $B\subset A$. Then the \v{C}ech border homology sequence 
\begin{center}
\begin{tikzpicture}
\node (A) {$\cdots$};
\node (B) [node distance=2cm, right of=A] {$\check{H}_{n-1}^{\infty}(A,B;G)$};
\node (C) [node distance=3cm, right of=B] {$\check{H}_{n}^{\infty}(X,A;G)$};
\node (D) [node distance=3cm, right of=C] {$\check{H}_{n}^{\infty}(X,B;G)$};
\node (E) [node distance=3cm, right of=D] {$\check{H}_{n}^{\infty}(A,B;G)$};
\node (F) [node distance=2cm, right of=E] {$\cdots$};
\draw[->] (B) to node [above]{}(A);
\draw[->] (C) to node [above]{$\bar{\partial}^{\infty}_{n}$} (B);
\draw[->] (D) to node [above]{$\bar{j}_{*}^{\infty}$}(C);
\draw[->] (E) to node [above]{$\bar{i}_{*}^{\infty}$}(D);
\draw[->] (F) to node [above]{}(E);
\end{tikzpicture}
\end{center}
and the \v{C}ech border cohomology sequence
\begin{center}
\begin{tikzpicture}
\node (A) {$\cdots$};
\node (B) [node distance=2cm, right of=A] {$\hat{H}^{n-1}_{\infty}(A,B;G)$};
\node (C) [node distance=3cm, right of=B] {$\hat{H}^{n}_{\infty}(X,A;G)$};
\node (D) [node distance=3cm, right of=C] {$\hat{H}^{n}_{\infty}(X,B;G)$};
\node (E) [node distance=3cm, right of=D] {$\hat{H}^{n}_{\infty}(A,B;G)$};
\node (F) [node distance=2cm, right of=E] {$\cdots$};
\draw[->] (A) to node [above]{}(B);
\draw[->] (B) to node [above]{$\bar{\delta}_{\infty}^{n}$} (C);
\draw[->] (C) to node [above]{$\bar{j}^{*}_{\infty}$}(D);
\draw[->] (D) to node [above]{$\bar{i}^{*}_{\infty}$}(E);
\draw[->] (E) to node [above]{}(F);
\end{tikzpicture}
\end{center}
are partially exact and exact, respectively. Here $\bar{\partial}_{n}^{\infty}=j_{n-1}^{'\infty}\cdot \partial_{n}^{\infty}$, $\bar{\delta}_{\infty}^{n}=\delta_{\infty}^{n}\cdot j_{\infty}^{' n-1}$ and $\bar{j}_{\infty}^{*}$ and $\bar{i}_{\infty}^{*}$ are the homomorphisms induced by the inclusion maps $j^{'}:A\to (A,B)$, $\bar{i}:(A,B)\to (X,B)$ and $\bar{j}:(X,B)\to (X,A)$.
\end{theorem}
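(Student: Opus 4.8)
The plan is to realize both sequences of the triple at the level of nerves and then transport them through the limit construction, exactly as the pair sequences were obtained in Theorem~\ref{1.6}.

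First I would introduce the directed set ${\rm cov}_{\infty}(X,A,B)$ of border covers of the triple $(X,A,B)$: finite families $\alpha=\{\alpha_{v}\}_{v\in(V_{\alpha},V_{\alpha}^{A},V_{\alpha}^{B})}$ of open subsets of $X$ with $V_{\alpha}^{B}\subseteq V_{\alpha}^{A}\subseteq V_{\alpha}$ for which there is a compact subset $K_{\alpha}$ of $X$ with $X\setminus K_{\alpha}=\bigcup_{v\in V_{\alpha}}\alpha_{v}$, $A\setminus K_{\alpha}\subseteq\bigcup_{v\in V_{\alpha}^{A}}\alpha_{v}$ and $B\setminus K_{\alpha}\subseteq\bigcup_{v\in V_{\alpha}^{B}}\alpha_{v}$, the refinement relation being defined by projection functions respecting the three index sets. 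The nerve of such an $\alpha$ is a simplicial triple $(X_{\alpha},A_{\alpha},B_{\alpha})$ with $B_{\alpha}\subseteq A_{\alpha}\subseteq X_{\alpha}$. Repeating the argument of Proposition~\ref{Pro. 1.4} and the constructions of the order preserving functions $\varphi:{\rm cov}_{\infty}(X,A)\to{\rm cov}_{\infty}(A,\emptyset)$ and $\psi:{\rm cov}_{\infty}(X,A)\to{\rm cov}_{\infty}(X,\emptyset)$ used before Theorem~\ref{1.6}, I would show that the forgetful maps carry ${\rm cov}_{\infty}(X,A,B)$ onto cofinal subsets of ${\rm cov}_{\infty}(X,A)$, ${\rm cov}_{\infty}(X,B)$ and ${\rm cov}_{\infty}(A,B)$, with associated nerves $(X_{\alpha},A_{\alpha})$, $(X_{\alpha},B_{\alpha})$ and $(A_{\alpha},B_{\alpha})$ respectively. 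Consequently every group occurring in the two triple sequences is, up to a canonical isomorphism of the $\Phi_{n}$, $\Psi^{n}$ type already established in this section, the inverse limit (for homology), respectively the direct limit (for cohomology), taken over the single directed set ${\rm cov}_{\infty}(X,A,B)$, of the homology, respectively cohomology, groups of the corresponding simplicial pair among $(X_{\alpha},A_{\alpha})$, $(X_{\alpha},B_{\alpha})$ and $(A_{\alpha},B_{\alpha})$.

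Next, for a fixed $\alpha\in{\rm cov}_{\infty}(X,A,B)$ the formal homology theory of simplicial complexes ([E-St], Ch.~VI) supplies the exact homology sequence
\[\cdots\to H_{n}(A_{\alpha},B_{\alpha};G)\to H_{n}(X_{\alpha},B_{\alpha};G)\to H_{n}(X_{\alpha},A_{\alpha};G)\xrightarrow{\ \bar{\partial}\ }H_{n-1}(A_{\alpha},B_{\alpha};G)\to\cdots\]
and, dually, the exact cohomology sequence of the simplicial triple, in which $\bar{\partial}$ is the composite of the pair boundary $\partial:H_{n}(X_{\alpha},A_{\alpha};G)\to H_{n-1}(A_{\alpha};G)$ with the homomorphism induced by the simplicial inclusion $A_{\alpha}\hookrightarrow(A_{\alpha},B_{\alpha})$, and correspondingly for the coboundary. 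These sequences are natural with respect to the refinement homomorphisms $p_{\alpha *}^{\beta}$ and $p_{\alpha}^{\beta *}$, so they form an inverse system of exact sequences (homology) and a direct system of exact sequences (cohomology). Taking direct limits and using that a filtered colimit of abelian groups is exact yields exactness of the \v{C}ech border cohomology sequence of the triple; taking inverse limits and using that the inverse limit functor over a directed set is only left exact yields partial exactness of the \v{C}ech border homology sequence, exactly as in Theorem~\ref{1.6}. The stated formulas $\bar{\partial}_{n}^{\infty}=j_{n-1}^{'\infty}\cdot\partial_{n}^{\infty}$ and $\bar{\delta}_{\infty}^{n}=\delta_{\infty}^{n}\cdot j_{\infty}^{' n-1}$ then follow by passing to the limit in the simplicial formula for $\bar{\partial}$ above and in its cohomological analogue, since $\partial_{n}^{\infty}$ and $\delta_{\infty}^{n}$ were defined, through $\Phi_{n-1}$ and $\Psi^{n}$, as limits of the simplicial pair boundary and coboundary homomorphisms, while $j_{n-1}^{'\infty}$ and $j_{\infty}^{' n-1}$ are the limits of the homomorphisms induced by the inclusion $j':A\to(A,B)$.

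The step I expect to require the most care is the cofinality bookkeeping of the first step: one must verify that an arbitrary border cover of, say, the pair $(A,B)$ can be refined to the restriction of a border cover of the whole triple carrying a common compact enclosure, and that all these identifications are compatible with the three refinement systems simultaneously; this is a threefold elaboration of Proposition~\ref{Pro. 1.4} together with the $\varphi,\psi$ type constructions. The alternative of deriving the triple sequences from the three pair sequences of Theorem~\ref{1.6} and the naturality of Theorem~\ref{theorem 1.5} by the usual diagram chase works cleanly for the cohomology statement, but in homology it is obstructed by precisely the failure of full exactness, which is why the simplicial route is preferable here.
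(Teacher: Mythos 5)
Your construction is sound, but it is a genuinely different route from the one the paper takes. The paper disposes of this theorem in one line: having already established the pair sequences and their (partial) exactness (Theorem~\ref{1.6}) together with the naturality of $\partial_{n}^{\infty}$ and $\delta_{\infty}^{n}$ (Theorem~\ref{theorem 1.5}), it derives the triple sequences purely formally, ``as in Theorems 10.2 and 10.2c of [E-St], Ch.~I, \S 10''. You instead rebuild everything at the level of nerves: border covers of triples, cofinality of the three forgetful maps, the exact simplicial sequences of the triples $(X_{\alpha},A_{\alpha},B_{\alpha})$, and passage to limits --- exactness of directed colimits giving the cohomology statement, and preservation of order-two composites under inverse limits giving partial exactness in homology (note that this, not ``left exactness'', is the relevant property; left exactness of $\varprojlim$ would not give exactness at any spot here, and none is claimed). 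Your route works, at the cost of redoing for triples all the cover-theoretic bookkeeping ($\Phi_{n}$-, $\Psi^{n}$-type identifications, cofinality over three restriction maps) that the paper carried out only for pairs; what it buys is an explicit limit-level description of $\bar{\partial}_{n}^{\infty}$ and $\bar{\delta}_{\infty}^{n}$, and you correctly flag that bookkeeping as the delicate step. Your closing claim that the formal diagram-chase route is ``obstructed'' in homology by the failure of full exactness is, however, mistaken, and it is precisely the point of the paper's proof: the theorem asserts only partial exactness (order two) of the homology triple sequence, and that much follows formally from the order-two property of the pair sequences, the naturality of Theorem~\ref{theorem 1.5} applied to the inclusions $\bar{i}$, $\bar{j}$, $j'$, and the vanishing of the border homology of pairs of the form $(A,A)$; full exactness of the pair sequences is needed only for the cohomological half, where it is available. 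So the paper's citation of [E-St] is a legitimate shortcut, and your argument is best viewed as an independent, more constructive verification rather than the only viable path.
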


\begin{proof}
The proof is similar to the proof of the corresponding Theorems 10.2 and 10.2c of [E-St] (see Ch. I,\textsection† 10).
\end{proof}

\section{On some applications of \v{C}ech border homology and cohomology groups}
Now we are mainly interested in the following problem: how to characterize the \v{C}ech homology and cohomology groups, coefficients of cyclicity, and cohomological dimensions of remainders of Stone-\v{C}ech compactifications of spaces.

Our main result about the connection between \v{C}ech (co)homology groups of remainders and \v{C}ech border (co)homology groups of spaces is.

\begin{theorem}
Let $(X,A)\in ob(\mathcal{M}^{2}_{p})$ and let $(\beta X,\beta A)$ be the pair of Stone-\v{C}ech compactifications of $X$ and $A$. Then 
\[\check{{H}}_{n}^{f}(\beta X\setminus X,\beta A\setminus A;G)=\check{{H}}_{n}^{\infty}(X,A;G)\] 
and
\[\hat{{H}}^{n}_{f}(\beta X\setminus X,\beta A\setminus A;G)=\hat{{H}}^{n}_{\infty}(X,A;G).\] 
\end{theorem}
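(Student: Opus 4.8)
The plan is to establish a cofinal correspondence between the directed set $\mathrm{cov}_\infty(X,A)$ of border covers of the metrizable pair $(X,A)$ and the directed set $\mathrm{cov}_f(\beta X\setminus X,\beta A\setminus A)$ of finite open covers of the remainder pair, in such a way that the associated nerves and their (co)homology groups match up, and then pass to the (inverse and direct) limits. The key mechanism is the standard fact that for a normal space $X$, open subsets of $\beta X$ correspond to ``free'' open subsets and, more to the point, an open subset $U$ of $X$ has compact complement $X\setminus U$ precisely when the closure of $U$ in $\beta X$ contains the remainder $\beta X\setminus X$; equivalently, $\widetilde U := \mathrm{cl}_{\beta X}(U)\setminus X = \mathrm{cl}_{\beta X}(U)\cap(\beta X\setminus X)$ is a set whose family, as $U$ ranges over a border cover, covers $\beta X\setminus X$. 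I would make this precise first.

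First I would define, for a border cover $\alpha=\{\alpha_v\}_{v\in(V_\alpha,V_\alpha^A)}\in\mathrm{cov}_\infty(X,A)$, the finite family $\widetilde\alpha=\{\widetilde\alpha_v\}_{v\in(V_\alpha,V_\alpha^A)}$ of open subsets of $\beta X\setminus X$ by $\widetilde\alpha_v=\mathrm{cl}_{\beta X}(\alpha_v)\cap(\beta X\setminus X)$ (one checks these are open in the remainder since the $\alpha_v$ are, in a metrizable hence normal space, cozero-type enough; alternatively replace $\alpha_v$ by a cozero set refinement, which is where proper/metrizable hypotheses pay off). Because $K_\alpha=X\setminus\bigcup\alpha_v$ is compact, its closure in $\beta X$ is $K_\alpha$ itself and misses the remainder, so $\{\widetilde\alpha_v\}$ covers $\beta X\setminus X$; the condition $A\setminus K_\alpha\subseteq\bigcup_{v\in V_\alpha^A}\alpha_v$ gives $\beta A\setminus A\subseteq\bigcup_{v\in V_\alpha^A}\widetilde\alpha_v$ after intersecting with $\beta A\setminus A$ (using $\mathrm{cl}_{\beta X}A\cap A$-stuff, i.e.\ that $\beta A$ embeds as $\mathrm{cl}_{\beta X}A$). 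Thus $\widetilde\alpha\in\mathrm{cov}_f(\beta X\setminus X,\beta A\setminus A)$. Conversely, given a finite open cover $\gamma=\{\gamma_v\}$ of the remainder pair, shrink and extend: each $\gamma_v$ extends to an open $G_v\subset\beta X$ with $G_v\cap(\beta X\setminus X)=\gamma_v$, set $\alpha_v=G_v\cap X$; then $X\setminus\bigcup\alpha_v$ is a closed subset of $X$ that is also closed in $\beta X$ (its $\beta X$-closure adds nothing from the remainder, since $\bigcup G_v\supseteq\beta X\setminus X$), hence compact. This shows the assignment $\alpha\mapsto\widetilde\alpha$ has cofinal image; monotonicity (refinement-preserving) in both directions is routine from the definitions of refinement projection.

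Next I would verify the \textbf{nerve identification}: for $\alpha\in\mathrm{cov}_\infty(X,A)$ the abstract simplicial pair $(X_\alpha,A_\alpha)$ is isomorphic to the nerve pair of $\widetilde\alpha$ as a cover of $(\beta X\setminus X,\beta A\setminus A)$. A finite intersection $\alpha_{v_0}\cap\cdots\cap\alpha_{v_k}$ is nonempty iff $\widetilde\alpha_{v_0}\cap\cdots\cap\widetilde\alpha_{v_k}$ is nonempty --- the nontrivial direction uses that $\mathrm{cl}_{\beta X}(\alpha_{v_0})\cap\cdots\cap\mathrm{cl}_{\beta X}(\alpha_{v_k})\cap(\beta X\setminus X)\neq\emptyset$ forces the finite intersection of the $\alpha_{v_i}$ to be unbounded (noncompact closure), in particular nonempty, because in a metric space a finite intersection of open sets with empty intersection would have its closures also intersecting in the compact "core", contradiction; this is exactly the place where one invokes a Smirnov-type lemma about border covers, and it is where metrizability (or at least the structure used in Definition~1.1--1.2, cf.\ [Sm$_4$]) is essential. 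The carrier condition defining $A_\alpha$ matches the analogous condition for the nerve of $\widetilde\alpha$ restricted to $\beta A\setminus A$. Having matched the nerves, the refinement projections and hence the bonding maps $p^{\beta}_{\alpha*}$, $p^{\beta*}_\alpha$ agree under the identification (two projections give contiguous, hence homologous, simplicial maps, so the induced maps on $H_n(-;G)$ and $H^n(-;G)$ are well defined and compatible on both sides). Therefore the inverse system $\{H_n(X_\alpha,A_\alpha;G),p^\beta_{\alpha*},\mathrm{cov}_\infty(X,A)\}$ is isomorphic, as a pro-object (up to cofinal restriction), to $\{H_n((\beta X\setminus X)_{\widetilde\alpha},(\beta A\setminus A)_{\widetilde\alpha};G),\ldots,\mathrm{cov}_f(\beta X\setminus X,\beta A\setminus A)\}$, and dually for cohomology. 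Passing to $\varprojlim$ and $\varinjlim$ and using that cofinal subsystems have the same limit yields the two claimed equalities.

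\textbf{The main obstacle} I expect is the noncompactness/closure bookkeeping in the nerve identification --- precisely, proving that a finite subfamily of a border cover has nonempty (indeed noncompact) total intersection in $X$ if and only if the corresponding closures meet the remainder $\beta X\setminus X$. One must handle the subtlety that $\mathrm{cl}_{\beta X}(U\cap V)$ can be strictly smaller than $\mathrm{cl}_{\beta X}(U)\cap\mathrm{cl}_{\beta X}(V)$ in general, so the "only if" direction needs the specific geometry of border covers (finiteness, metrizability, the compact enclosure) and should be extracted as a separate lemma, presumably the metric analogue of Smirnov's lemma referenced as [Sm$_4$]; I would isolate it, prove it by a direct argument that an unbounded closed set in a metric space, being non-compact, has nonempty trace on the remainder while a relatively compact one does not, and then feed it into the otherwise formal limit argument. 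A secondary technical point, easily dispatched, is ensuring the extension/restriction of open sets between $X$ and $\beta X$ and between $A$ and $\beta A$ is done compatibly (so that $\beta A\setminus A\hookrightarrow\beta X\setminus X$ as the subspace it should be), which follows from $\beta A=\mathrm{cl}_{\beta X}A$ for closed $A$ in the normal (here metrizable) space $X$ together with properness of the maps involved.
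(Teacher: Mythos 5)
Your outline stands or falls on the ``nerve identification'' you isolate as the main obstacle, and that lemma is false, in both directions, so the termwise matching of the two systems cannot work as stated. For the direction you call nontrivial, take $X=\mathbb{R}$, $U=\bigcup_{n\in\mathbb{Z}}(n,n+\tfrac12)$, $V=\bigcup_{n\in\mathbb{Z}}(n+\tfrac12,n+1)$, and complete $\{U,V\}$ to a border cover by adjoining $W=\mathbb{R}$. Then $U\cap V=\emptyset$, but $\mathrm{cl}_{\mathbb{R}}U\cap\mathrm{cl}_{\mathbb{R}}V\supseteq\mathbb{Z}+\tfrac12$ is a noncompact closed set, so its closure in $\beta\mathbb{R}$ meets the remainder and lies in $\mathrm{cl}_{\beta\mathbb{R}}U\cap\mathrm{cl}_{\beta\mathbb{R}}V$; hence $\widetilde U\cap\widetilde V\cap(\beta\mathbb{R}\setminus\mathbb{R})\neq\emptyset$ and the pushed-forward family acquires a simplex that the nerve of the border cover does not have. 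The opposite direction fails too: for the border cover $\{(-\infty,1),(0,+\infty)\}$ of $\mathbb{R}$ the intersection $(0,1)$ is nonempty, yet $\mathrm{cl}_{\beta\mathbb{R}}(-\infty,1)\cap\mathrm{cl}_{\beta\mathbb{R}}(0,+\infty)\cap(\beta\mathbb{R}\setminus\mathbb{R})=\emptyset$ (for closed sets in a normal space, $\beta$-closures intersect in the $\beta$-closure of the intersection, here the compact set $[0,1]$), so a simplex disappears. The same phenomena occur if you replace closures by canonical open extensions of the $\alpha_v$ to $\beta X$, and no Smirnov-type lemma can repair this: the nerve of a border cover of $X$ is simply not reflected faithfully on $\beta X\setminus X$. (The secondary issue that $\mathrm{cl}_{\beta X}(\alpha_v)\cap(\beta X\setminus X)$ is closed rather than open in the remainder could be absorbed by the finite-closed-cover version of \v{C}ech theory, but the nerve mismatch cannot.)

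The paper avoids exactly this trap by running the correspondence in the opposite direction only. It starts from finite \emph{closed} covers $\alpha$ of $(\beta X\setminus X,\beta A\setminus A)$, applies Lemma 4 of [Sm$_{4}$] to obtain open swellings $s(\alpha)$ in $\beta X$, which by the defining property of a swelling have the same intersection pattern as $\alpha$, and then intersects with $X$: a nonempty finite intersection of the $s(\alpha)_v$ is open in $\beta X$ and therefore meets the dense set $X$, so the nerve of the border cover $s(\alpha)\wedge X$ coincides with the nerve of $\alpha$. Cofinality of $\{s(\alpha)\wedge X\}$ in $\mathrm{cov}_{\infty}(X,A)$, together with the comparison theorems for cofinal systems and for finite closed versus finite open covers ([E-St], Ch.~VIII, Theorems 3.14, 3.15, 4.13), then yields both isomorphisms. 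Your closing limit bookkeeping is fine, but to make the argument work you must replace your forward construction $\alpha\mapsto\widetilde\alpha$ by this swelling construction (or prove cofinality of its image and compatibility of projections for it), rather than trying to salvage the false intersection lemma.
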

\begin{proof}
Let $\alpha=\{\alpha_{v}\}_{v\in (V_{\alpha},V_{\alpha}^{\beta A\setminus A})}$ and $\alpha^{'}=\{\alpha^{'}_{w}\}_{w\in (W_{\alpha^{'}},W_{\alpha^{'}}^{\beta A\setminus A})}$ be the closed covers of pairs $(\beta X\setminus X,\beta A\setminus A)$ and $\alpha\geq \alpha^{'}$. By Lemma 4 of [Sm$_{4}$] there exist open swellings $\beta_{1}=\{\beta_{v}^{1}\}_{v\in (V_{\alpha},V_{\alpha}^{\beta A\setminus A})}$ and $\beta^{'}=\{\beta_{w}^{'}\}_{w\in (W_{\alpha^{'}},W_{\alpha^{'}}^{\beta A\setminus A})}$ of $\alpha$ and $\alpha^{'}$ in $\beta X$, respectively. Assume that $\alpha_{v}\subseteq \alpha^{'}_{w_{k}}$, $k=1,2,\cdots ,m_{v}$. Let
 \[\beta_{v}=\beta_{v}^{1}\cap (\bigcap\limits_{k=1}^{{{m}_{v}}}{\beta _{{{w}_{k}}}^{'}}),~~~~v\in V_{\alpha}.\]

Note that $\alpha_{v}\subset\beta_{v}\subset\beta_{v}^{1}$ for each $v\in V_{\alpha}$. It is clear that $\beta=\{\beta_{v}\}_{v\in (V_{\alpha},V_{\alpha}^{A})}$ is a swelling of $\alpha=\{\alpha_{v}\}_{v\in (V_{\alpha},V_{\alpha}^{\beta A\setminus A})}$ and $\beta\geq \beta^{'}$. 

The swelling in $\beta X$ of closed cover $\alpha$ of $(\beta X\setminus X,\beta A\setminus A)$ is denoted by $s(\alpha)$. Let $S$ be the set of all swellings of such kind.

Now define an order $\geq^{'}$ in $S$. By definition,
\[s(\alpha^{'})\geq^{'} s(\alpha)\Leftrightarrow s(\alpha^{'})\geq s(\alpha)~\wedge ~\alpha^{'}\geq \alpha.\]

It is clear that $S$ is directed by $\geq^{'}$. Let $((\beta X\setminus X)_{s(\alpha)},(\beta A\setminus A)_{s(\alpha)})$ be the nerve of $s(\alpha)\in S$ and $p_{s(\alpha)s(\alpha^{'})}$ be the projection simplicial map induced by the refinement $\alpha^{'}\geq\alpha$. Consider an inverse system
\[\{H_{n}((\beta X\setminus X)_{s(\alpha)},(\beta A\setminus A)_{s(\alpha)};G),p_{s(\alpha)*}^{s(\alpha^{'})},S\}\]
and a direct system
\[\{H^{n}((\beta X\setminus X)_{s(\alpha)},(\beta A\setminus A)_{s(\alpha)};G),p_{s(\alpha)}^{s(\alpha^{'})*},S\}.\]

Let $\varphi:S\to \rm{cov}_{f}^{cl}(\beta X\setminus X,\beta A\setminus A)$ be the function in the set of closed finite covers of pair $(\beta X\setminus X,\beta A\setminus A)$ given by formula
\[\varphi(s(\alpha))=\alpha,~~~~s(\alpha)\in S.\]

Note that $\varphi$ is an increasing function and 
\[\varphi(S)=\rm{cov}_{f}^{cl}(\beta X\setminus X,\beta A\setminus A).\]

For each index $s(\alpha)\in S$, we have
\[H_{n}((\beta X\setminus X)_{s(\alpha)},(\beta A\setminus A)_{s(\alpha)};G)=H_{n}((\beta X\setminus X)_{\alpha},(\beta A\setminus A)_{\alpha};G)\]
and
\[H^{n}((\beta X\setminus X)_{s(\alpha)},(\beta A\setminus A)_{s(\alpha)};G)=H^{n}((\beta X\setminus X)_{\alpha},(\beta A\setminus A)_{\alpha};G).\]

It is known that for normal spaces the \v{C}ech (co)homology groups based on finite open covers and on finite closed covers are isomorphic. By Theorems 3.14 and 4.13 of ([E-St],Ch.VIII) we have
\begin{equation}
\check{H}_{n}^{f}(\beta X\setminus X,\beta A\setminus A;G)\approx \underset{\longleftarrow}{\mathop{\lim }}\,\{{{H}_{n}}{{((\beta X\backslash X)_{s(\alpha )},(\beta A\backslash A)_{s(\alpha )}}};G),{{p}_{s(\alpha )*}^{s({{\alpha }^{'}})}},S\}
\end{equation}
and
\begin{equation}
\hat{H}^{n}_{f}(\beta X\setminus X,\beta A\setminus A;G)\approx \underset{\longrightarrow}{\mathop{\lim }}\,\{{{H}^{n}}({(\beta X\backslash X)_{s(\alpha )},(\beta A\backslash A)_{s(\alpha )}};G),{{p}_{s(\alpha )}^{s({{\alpha }^{'}})*}},S\}
\end{equation}
For each swelling $s(\alpha)=\{s(\alpha)_{v}\}_{v\in (V_{\alpha},V_{\alpha}^{\beta A\setminus A})}\in S$, the family 
\[s(\alpha)\wedge X=\{s(\alpha)_{v}\cap X\}_{v\in (V_{\alpha},V_{\alpha}^{\beta A\setminus A})}\]
is a border cover of $(X,A)$.

Let $\psi:S\to {\rm cov}_{\infty}(X,A)$ be the function defined by formula
\[\psi(s(\alpha))=s(\alpha)\wedge X,~~~~s(\alpha)\in S.\]

The function $\psi$ increases and $\psi(S)$ is a cofinal subset of ${\rm cov}_{\infty}(X,A)$. Note that the correspondence
\[((\beta X\setminus X)_{s(\alpha)},(\beta A\setminus A)_{s(\alpha)})\to (X_{s(\alpha)\wedge X},A_{s(\alpha)\wedge X}):s(\alpha)_{v}\to s(\alpha)_{v}\cap X,~~~v\in V_{\alpha}\]
induces an isomorphism of pairs of simplicial complexes. Thus, for each $s(\alpha)\in S$, we have the isomorphisms
\[H_{n}((\beta X\setminus X)_{s(\alpha)}, (\beta A\setminus A)_{s(\alpha)};G)=H_{n}(X_{s(\alpha)\wedge X},A_{s(\alpha)\wedge X};G)\]
and
\[H^{n}((\beta X\setminus X)_{s(\alpha)}, (\beta A\setminus A)_{s(\alpha)};G)=H^{n}(X_{s(\alpha)\wedge X},A_{s(\alpha)\wedge X};G).\]

By Theorems 3.15 and 4.13 of ([E-St],Ch.VIII), we have
\begin{equation}
\check{{H}}_{n}^{\infty}(X,A;G)=\underset{\longleftarrow}{\mathop{\lim }}\,\{{{H}_{n}}({(\beta X\backslash X)_{s(\alpha )},(\beta A\backslash A)_{s(\alpha )}};G),{{p}_{s(\alpha )*}^{s({{\alpha }^{'}})}},S\}
\end{equation}
and
\begin{equation}
\hat{{H}}_{\infty}^{n}(X,A;G)=\underset{\longrightarrow}{\mathop{\lim }}\,\{{{H}_{n}}({{(\beta X\backslash X,\beta A\backslash A)}_{s(\alpha )}};G),{{p}_{s(\alpha )}^{s({{\alpha }^{'}})*}},S\}.
\end{equation}

From (1), (2), (3) and (4) it follows that 
\[\check{{H}}_{n}^{\infty}(X,A;G)=\check{{H}}_{n}^{f}(\beta X\setminus X,\beta A\setminus A;G)\]
and 
\[\hat{{H}}^{n}_{\infty}(X,A;G)=\hat{{H}}^{n}_{f}(\beta X\setminus X,\beta A\setminus A;G).\]
\end{proof}

The cohomological coefficient of cyclicity $\eta_{G}(X,A)$ of pair $(X,A)$ was defined by S.Novak [N] and M.F.Bokstein [Bo]. Dually one can define the homological coefficient of cyclicity $\eta^{G}(X,A)$ of pair $(X,A)$.

Now give the following definitions and results.
\begin{Definition}
Let $G$ be an abelian group and $n$ nonnegative integer. A border (co)homological coefficient of cyclicity of pair $(X,A)\in ob (\mathcal{M}^{2}_{p})$ with respect to $G$ denoted by $(\eta_{G}^{\infty}(X,A))$ $\eta_{\infty}^{G}(X,A)$ is $n$, if $(\hat{H}_{\infty}^{m}(X,A;G)=0)$ $\check{H}_{m}^{\infty}(X,A;G)=0$ for all $m>n$ and $(\hat{H}_{\infty}^{n}(X,A;G)\neq 0)$ $\check{H}^{\infty}_{n}(X,A;G)\neq 0$.

Finally, $(\eta_{G}^{\infty}(X,A)=+\infty )$ $\eta^{G}_{\infty}(X,A)=+\infty$ if for every $m$ there is $n\geq m$ with $(\hat{H}_{\infty}^{n}(X,A;G)\neq 0)$ $\check{H}^{\infty}_{n}(X,A;G)\neq 0$.
\end{Definition}

\begin{theorem}
For each pair $(X,A)\in ob(\mathcal{M}^{2}_{p})$,
\[\eta_{G}^{\infty}(X,A)=\eta_{G}(\beta X\setminus X,\beta A\setminus A)\]
and
\[\eta^{G}_{\infty}(X,A)=\eta^{G}(\beta X\setminus X,\beta A\setminus A).\]
\end{theorem}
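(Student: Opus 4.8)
The plan is to deduce this theorem directly from Theorem 2.1 together with the definitions of the (co)homological coefficients of cyclicity. Recall that the border coefficient $\eta_G^\infty(X,A)$ is by Definition 2.2 the largest $n$ with $\hat{H}_\infty^n(X,A;G)\neq 0$ (and $+\infty$ if no such largest $n$ exists), with the analogous description of $\eta^G_\infty(X,A)$ in terms of $\check{H}_n^\infty(X,A;G)$; while the classical coefficients $\eta_G(\beta X\setminus X,\beta A\setminus A)$ and $\eta^G(\beta X\setminus X,\beta A\setminus A)$ of S.\,Novak are defined in exactly the same way using the \v{C}ech groups $\hat{H}^n_f(\beta X\setminus X,\beta A\setminus A;G)$ and $\check{H}_n^f(\beta X\setminus X,\beta A\setminus A;G)$ based on all finite open covers.

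First I would invoke Theorem 2.1, which for every $(X,A)\in ob(\mathcal{M}^2_p)$ gives the equalities
\[
\check{H}_n^f(\beta X\setminus X,\beta A\setminus A;G)=\check{H}_n^\infty(X,A;G),\qquad
\hat{H}^n_f(\beta X\setminus X,\beta A\setminus A;G)=\hat{H}^n_\infty(X,A;G)
\]
for all $n$. Since $(\beta X,\beta A)$ is a pair of compact metrizable spaces and $(X,A)$ is closed in it, $(\beta X\setminus X,\beta A\setminus A)$ is a legitimate pair on which the classical coefficients of cyclicity are defined, and the displayed isomorphisms are natural in $n$, so they identify the whole graded families of groups. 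Consequently the set of indices $n$ for which the cohomology group is nonzero is literally the same on both sides, and likewise for homology.

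Then I would simply compare the two definitions index by index. Fix $n$; if $\eta_G^\infty(X,A)=n<+\infty$, then $\hat{H}_\infty^m(X,A;G)=0$ for all $m>n$ and $\hat{H}_\infty^n(X,A;G)\neq 0$; transporting along Theorem 2.1 gives $\hat{H}^m_f(\beta X\setminus X,\beta A\setminus A;G)=0$ for $m>n$ and $\hat{H}^n_f(\beta X\setminus X,\beta A\setminus A;G)\neq 0$, which is exactly $\eta_G(\beta X\setminus X,\beta A\setminus A)=n$; the converse is symmetric. The case $\eta_G^\infty(X,A)=+\infty$ is handled the same way: the existence of arbitrarily large $n$ with $\hat{H}_\infty^n(X,A;G)\neq 0$ is equivalent, via the isomorphisms, to the existence of arbitrarily large $n$ with $\hat{H}^n_f(\beta X\setminus X,\beta A\setminus A;G)\neq 0$. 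The homological statement $\eta^G_\infty(X,A)=\eta^G(\beta X\setminus X,\beta A\setminus A)$ follows by the identical argument applied to the $\check{H}_n$ groups.

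There is essentially no obstacle here: the content of the result lies entirely in Theorem 2.1, and the present theorem is a formal consequence obtained by unwinding Definition 2.2 and the definition of the Novak--Bokstein coefficients. The only point requiring a word of care is that the coefficient of cyclicity is defined purely from the isomorphism type (indeed the vanishing/non-vanishing) of the graded groups, so that the equalities of Theorem 2.1 — which are equalities of groups, not merely of their ranks or of some numerical invariant — suffice to conclude equality of the coefficients in all cases, including the infinite one.
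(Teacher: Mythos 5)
Your proposal is correct and follows essentially the same route as the paper: both deduce the theorem directly from Theorem 2.1 by transporting the vanishing/non-vanishing of the cohomology (resp.\ homology) groups across the isomorphisms and comparing with Definition 2.2 and the Novak--Bokstein definition. Your treatment is if anything slightly more complete, since you also spell out the $+\infty$ case, which the paper leaves implicit.
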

\begin{proof}
This is an immediate consequence of Theorem 2.1. Indeed, let $\eta_{G}(\beta X\setminus X,\beta A\setminus A)=n$. Then for each $m>n$, $\hat{H}_{f}^{m}(\beta X\setminus X,\beta A\setminus A;G)=0$ and $\hat{H}_{f}^{n}(\beta X\setminus X,\beta A\setminus A;G)\neq 0$. From the isomorphism
\[\hat{H}_{f}^{k}(\beta X\setminus X,\beta A\setminus A;G)=\hat{H}_{f}^{k}(X,A;G)\]
it follows that $\hat{H}_{\infty}^{m}(X,A;G)=0$ for each $m>n$, and $\hat{H}_{\infty}^{n}(X,A;G)\neq 0$. Thus, $\eta_{G}^{\infty}(X,A)=n=\eta_{G}(\beta X\setminus X, \beta A\setminus A)$.

Analogously we can prove equality $\eta^{G}_{\infty}(X,A)=\eta^{G}(\beta X\setminus X,\beta A\setminus A).$
\end{proof}

The theory of cohomological dimension has become an important branch of dimension theory since A. Dranishnikov solved P.S. Alexandrov's problem and developed the theory of extension dimension ([D], [D-Dy]).

Our next aim is to study some questions of theory of cohomological dimension. In particular, we now give a description of cohomological dimension of remainder of Stone-\v{C}ech compactification of metrizable space.

Following Y. Kodama (see the appendix of [N]) and T. Miyata [Mi] we give the following definition.

\begin{Definition}
The border small cohomological dimension $d_{\infty}^{f}(X;G)$ of normal space $X$ with respect to group $G$ is defined to be the smallest integer $n$ such that, whenever $m\geq n$ and $A$ is closed in $X$, the homomorphism $i^{*}_{A,\infty}:\hat{H}_{\infty}^{m}(X;G)\to \hat{\rm{H}}_{\infty}^{m}(A;G)$ induced by the inclusion $i:A\to X$ is an epimorphism. 

The border small cohomological dimension of $X$ with coefficient group $G$ is a function $d^{f}_{\infty}:\mathcal{N}\to {\rm N}\cup \{0,+ \infty\}:X\to n$, where $d_{\infty}^{f}(X;G)=n$ and ${\rm N}$ is the set of all positive integers.
\end{Definition}
\begin{theorem}
Let $X$ be a metrizable space. Then the following equality 
\[d^{f}_{\infty}(X;G)= d_{f}(\beta X\setminus X;G)\]
holds, where $d_{f}(\beta X\setminus X;G)$ is the small cohomological dimension of $\beta X\setminus X$ (see $\rm{[N], p.199}$).
\end{theorem}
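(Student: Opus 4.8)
The plan is to push both quantities onto the remainder $Z=\beta X\setminus X$ by means of Theorem 2.1 and then to compare them there; the only substantial work is a reduction showing that closed subsets of $Z$ of the special form $\beta A\setminus A$ are rich enough to compute the small cohomological dimension. I would begin by fixing the dictionary. For $A$ closed in the metrizable space $X$, $A$ is $C^{*}$-embedded, so ${\rm cl}_{\beta X}(A)=\beta A$ and ${\rm cl}_{\beta X}(A)\cap X=A$; hence $\beta A\setminus A={\rm cl}_{\beta X}(A)\cap Z$ is closed in $Z$, and the inclusion $i\colon A\hookrightarrow X$ is a proper map in $\mathcal{M}^{2}_{p}$ which induces on the remainders exactly the inclusion $\beta A\setminus A\hookrightarrow Z$. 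The first step is to verify that the isomorphisms of Theorem 2.1 are natural with respect to proper maps; this is read off from the construction in its proof, which carries a finite cover of the remainder (through a swelling in $\beta X$) to a border cover of the space, compatibly with restriction to closed subspaces. With this in hand, under the identifications of Theorem 2.1 the homomorphism $i^{*}_{A,\infty}\colon\hat{H}^{m}_{\infty}(X;G)\to\hat{H}^{m}_{\infty}(A;G)$ of Definition 2.4 becomes precisely the restriction homomorphism $\hat{H}^{m}_{f}(Z;G)\to\hat{H}^{m}_{f}(\beta A\setminus A;G)$ induced by $\beta A\setminus A\hookrightarrow Z$; consequently ``$d^{f}_{\infty}(X;G)\le n$'' is equivalent to ``for every closed $A\subseteq X$ and every $m\ge n$ the restriction $\hat{H}^{m}_{f}(Z;G)\to\hat{H}^{m}_{f}(\beta A\setminus A;G)$ is onto''.

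Next I would carry out the reduction inside $Z$. The aim is to show that for every closed $F\subseteq Z$ the family $\mathcal{C}_{F}=\{\,\beta A\setminus A : A\text{ closed in }X,\ F\subseteq{\rm int}_{Z}(\beta A\setminus A)\,\}$ is a directed (by reverse inclusion) cofinal subfamily of the closed neighbourhoods of $F$ in $Z$, with $\bigcap\mathcal{C}_{F}=F$. The construction of a member of $\mathcal{C}_{F}$ inside a prescribed neighbourhood goes as follows: starting from an open $U\supseteq F$ in $Z$, one produces an open $W'$ in $\beta X$ with ${\rm cl}_{\beta X}(F)\subseteq W'$ and ${\rm cl}_{\beta X}(W')\cap(Z\setminus U)=\emptyset$ (here one uses the normality of $\beta X$ together with the metrizability of $X$ to control how ${\rm cl}_{\beta X}(F)$ and ${\rm cl}_{\beta X}(Z\setminus U)$ can meet inside $X$), and then sets $A={\rm cl}_{X}(W'\cap X)$, so that ${\rm cl}_{\beta X}(A)={\rm cl}_{\beta X}(W')$ by density of $X$ in $\beta X$, whence $F\subseteq W'\cap Z\subseteq\beta A\setminus A={\rm cl}_{\beta X}(W')\cap Z\subseteq U$ with $W'\cap Z$ open in $Z$. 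Granting this, $\mathcal{C}_{F}$ has inverse limit $F$ in the sense relevant to \v{C}ech cohomology based on finite covers, so the continuity (tautness) of $\hat{H}^{*}_{f}$ gives $\hat{H}^{m}_{f}(F;G)=\varinjlim\{\,\hat{H}^{m}_{f}(D;G):D\in\mathcal{C}_{F}\,\}$ with restrictions as bonding maps.

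The conclusion then follows by combining the two steps. The condition obtained at the end of the first step is equivalent to ``for every closed $F\subseteq Z$ and every $m\ge n$ the restriction $\hat{H}^{m}_{f}(Z;G)\to\hat{H}^{m}_{f}(F;G)$ is onto'': one implication is trivial, since each $\beta A\setminus A$ is closed in $Z$; for the other, an arbitrary element of the directed colimit $\hat{H}^{m}_{f}(F;G)$ already comes from some $\hat{H}^{m}_{f}(D;G)$ with $D=\beta A\setminus A\in\mathcal{C}_{F}$, onto which $\hat{H}^{m}_{f}(Z;G)$ maps surjectively. The last displayed condition is precisely ``$d_{f}(Z;G)\le n$'', so passing to the least admissible $n$ (and reading $+\infty$ in the evident way) yields $d^{f}_{\infty}(X;G)=d_{f}(\beta X\setminus X;G)$. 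I expect the genuine difficulty to lie in the second step, namely in verifying that, for $X$ metrizable, closed neighbourhoods of the form $\beta A\setminus A$ are cofinal among all closed neighbourhoods of a given closed subset of the generally non-compact (and possibly non-normal) remainder $Z$; the naturality claim in the first step and the colimit bookkeeping at the end are, by contrast, routine once the continuity of $\hat{H}^{*}_{f}$ and the functoriality of the construction in Theorem 2.1 are available.
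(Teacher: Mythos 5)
Your first inequality and your reduction dictionary coincide with the paper's: the paper also deduces surjectivity of $i^{*}_{A,\infty}$ from surjectivity of the restriction $\hat{H}^{m}_{f}(\beta X\setminus X;G)\to\hat{H}^{m}_{f}(\beta A\setminus A;G)$ via the commutative square built from the Theorem 2.1 identifications (its diagram (5)), and your cofinality statement for sets of the form $\beta A\setminus A$ is exactly what the paper proves with Smirnov's Lemma 5 (choose $W$ open in $\beta X$ with $W\cap(\beta X\setminus X)=V$ and $\bar{W}\cap(\beta X\setminus X)\subseteq U$, then set $A=\bar{W}^{\beta X}\cap X$, so $B\subseteq\beta A\setminus A\subseteq U$). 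Up to that point the two arguments are essentially the same.

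The gap is in your second step, where you invoke ``the continuity (tautness) of $\hat{H}^{*}_{f}$'' to write $\hat{H}^{m}_{f}(F;G)=\varinjlim_{D\in\mathcal{C}_{F}}\hat{H}^{m}_{f}(D;G)$ for a closed subset $F$ of the remainder $Z=\beta X\setminus X$. That is not a quotable general fact in this setting: the Eilenberg--Steenrod continuity theorem is for compact pairs, and the usual tautness statements concern full (all-covers) \v{C}ech/Alexander--Spanier cohomology of closed subsets of paracompact spaces, whereas here the theory is based on finite covers and $Z$ is in general neither compact nor normal, so one cannot even identify $\hat{H}^{m}_{f}(F;G)$ with $\check{H}^{m}$ of a closure in $\beta X$ and appeal to compact continuity. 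In other words, the only part of your argument that you actually use from the asserted colimit --- that every class on $F$ is the restriction of a class on some neighbourhood $D=\beta A\setminus A$ --- is precisely the nontrivial content, and it is what the paper proves by hand: it represents a class $a\in\check{H}^{m}_{f}(B;G)$ on the nerve of a finite closed cover $\alpha$ of $B$, uses Smirnov's Lemma 4 (valid here because $X$ is metrizable) to swell $\alpha$ to $\beta X\setminus X$ without changing the nerve, chooses $A$ with $B\subseteq\beta A\setminus A$ inside the union of the swelling so that $N(\alpha)\cong N(\tilde{\tilde{\alpha}}_{|\beta A\setminus A})$, and thus lifts $a$ to an element $b\in\hat{H}^{m}_{f}(\beta A\setminus A;G)$ before applying the surjectivity coming from $d^{f}_{\infty}(X;G)=n$. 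So you have mislocated the difficulty: the cofinality of $\mathcal{C}_{F}$, which you flag as the hard part, follows rather directly from Lemma 5, while the tautness-type extension of classes, which you treat as routine, is exactly where the special structure of $\beta X\setminus X$ for metrizable $X$ (Lemma 4 on swellings) must be used; without supplying that argument, your proof of $d_{f}(\beta X\setminus X;G)\le d^{f}_{\infty}(X;G)$ is incomplete.
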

\begin{proof}
Let $A$ be a closed subset of $X$. Assume that $d_{f}(\beta X\setminus X;G)=n$. Then for each $m\geq n$ the homomorphism $i^{*}_{\beta X\setminus X,\infty}:\hat{H}_{f}^{m}(\beta X\setminus X;G)\to \hat{H}_{f}^{m}(\beta A\setminus A;G)$ is an epimorphim. Consider the following commutative diagram 

\begin{center}
\begin{tikzpicture}
\node (A) {$\hat{\rm{H}}^{m}_{\infty}(X;G)$};
\node (B) [node distance=4cm, right of=A] {$\hat{\rm{H}}^{m}_{f}(\beta X\setminus X;G)$};
\node (B1) [node distance=1cm, below of=A] {};
\node (B2) [node distance=7cm, right of=B1] {$(5)$};
\node (C) [node distance=2cm, below of=A] {$\hat{\rm{H}}^{m}_{\infty}(A;G)$};
\node (D) [node distance=4cm, right of=C] {$\hat{\rm{H}}^{m}_{f}(\beta A\setminus A;G).$};
\draw[->] (A) to node [left]{$i_{A,\infty}^{*}$} (C);
\draw[->] (B) to node [right]{$i_{\beta A\setminus A}^{*}$}(D);
\draw[transparent] (A) edge node[rotate=0,opacity=1] {$\approx$} (B);
\draw[transparent] (C) edge node[rotate=0,opacity=1] {$\approx$} (D);

\end{tikzpicture}
\end{center}

It is clear that the homomorphim
\[i^{*}_{A,\infty}:\hat{H}_{f}^{m}(X;G)\to \hat{H}_{f}^{m}(A;G)\]
also is an epimorphim for each $m\geq n$. Thus, 
$$d^{f}_{\infty}(X;G)\leq n=d_{f}(\beta X\setminus X;G).~~~~~~~~~~~~(6)$$

Let $d_{\infty}^{f}(X;G)=n$. To see the reverse inequality, let $B$ be a closed subset of $\beta X\setminus X$ and let $m\geq n$.

Consider an open in $\beta X\setminus X$ neighbourhood $U$ of $B$. There exists an open neighbourhood $V$ of $B$ in $\beta X\setminus X$ such that $\bar{V}^{\beta X\setminus X}\subset U$. By Lemma 5 of [Sm$_{4}$] we can find an open set $W$ in $\beta X$ such that $W\cap(\beta X\setminus X)=V$ and $\bar{W}^{\beta X}\cap (\beta X\setminus X)\subseteq U$. Let $A=\bar{W}^{\beta X}\cap X$. It is clear that $\beta A=\bar{A}^{\beta X}$.

We have 
\[\bar{W}^{\beta X}=\overline{W\cap X}^{\beta X}\subset \overline{\bar{W}^{\beta X}\cap X}^{\beta X}\subset \overline{\bar{W}^{\beta X}}^{\beta X}=\bar{W}^{\beta X}.\]

Consequently, $\beta A=\overline{\bar{W}^{\beta X}\cap X}^{\beta X}=\bar{W}^{\beta X}$. This shows that 
\[B\subset \beta A\cap (\beta X\setminus X)\subset U. \]

Hence, we have 
\[B\subset \beta A\setminus A\subset U.\]

Thus, for each closed set $B$ of $\beta X\setminus X$ and its open neighbourhood $U$ in $\beta X\setminus X$ there exists a closed subset $A$ in $X$ such that $B\subset \beta A\setminus A\subset U$.

Let $a\in H^{n}_{f}(B;G)$. There is a closed finite cover $\alpha$ of $B$ such that an element $a_{\alpha}\in H^{m}(N(\alpha);G)$ represents the element $a$. 

Using Lemma 4 of [Sm$_{4}$] we can find the swellings $\tilde{\alpha}$ and $\tilde{\tilde{\alpha}}$ of $\alpha$ in $B$ and $\beta X\setminus X$, respectively, such that $\tilde{\tilde{\alpha}}_{|B}=\tilde{\alpha}$. Let $U$ be the union of elements of $\tilde{\tilde{\alpha}}$. There is a closed set $A$ of $X$ with $B\subset \beta A\setminus A\subset U$. The nerves $N(\alpha)$, $N(\tilde{\alpha})$ and $N(\tilde{\tilde{\alpha}}_{|\beta A \setminus A})$ are isomorphic. We can assume that 
$$H^{n}(N(\alpha);G)=H^{n}(N(\tilde{\alpha});G)=H^{n}(N(\tilde{\tilde{\alpha}}_{|\beta A\setminus A});G).$$
Hence, the element $a_{\alpha}$ also belongs to the group $H^{n}(N(\tilde{\tilde{\alpha}}_{|\beta A\setminus A});G)$. Consequently, it represents some element $b$ of $\hat{H}^{n}(\beta A\setminus A;G)$.

The inclusion $i_{A}:A\to X$ induces an epimorphism $i^{*}_{A,\infty}:\hat{H}^{m}_{\infty}(X;G)\to \hat{H}^{m}(A;G)$. From diagram $(5)$ it follows that the homomorphism $i^{*}_{\beta A\setminus A}:\hat{H}^{m}(\beta X\setminus X;G)\to \hat{H}^{m}(\beta A\setminus A;G)$ is an epimorphism. Consequently, there is an element $c\in \hat{H}^{m}(\beta X\setminus X;G)$ such that $i^{*}_{\beta A\setminus A}(c)=b$. The homomorphism $j_{B}^{*}:\hat{H}^{m}(\beta A\setminus A;G)\to \check{H}^{m}(B;G)$ induced by the inclusion $j_{B}:B\to \beta A\setminus A$ satisfies the condition $j^{*}_{B}(b)=a$. From equality $i_{\beta A\setminus A}\cdot j_{B}=i_{B}$ it follows that $i^{*}_{B}(c)=a$.

Thus the inclusion $i_{B}:B\to \beta X\setminus X$ also induces an epimorphism $i_{B}^{*}:\check{H}^{m}(\beta X\setminus X;G)\to \check{H}^{m}(B;G)$. Hence, we obtaine
\[d_{f}(\beta X\setminus X;G)\leq n=d_{\infty}^{f}(X;G).~~~~~~~~~(7)\]

From the inequalities $(6)$ and $(7)$ it follows that 
\[d_{\infty}^{f}(X;G)=d_{f}(\beta X\setminus X;G).\] 
\end{proof}
\begin{theorem}
Let $A$ be a closed subspace of a normal space $X$. Then 
\[d^{f}_{\infty}(A;G)\leq d^{f}_{\infty}(X;G).\] 
\end{theorem}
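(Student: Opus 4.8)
The plan is to reduce the statement for closed subspaces to the corresponding known monotonicity property of the small cohomological dimension $d_f$ on remainders of Stone--\v{C}ech compactifications, and then invoke Theorem 2.5. Concretely, let $A$ be a closed subspace of the metrizable (more generally, normal) space $X$; if we are in the metrizable case, then by Theorem 2.5 we have $d^f_\infty(X;G)=d_f(\beta X\setminus X;G)$ and $d^f_\infty(A;G)=d_f(\beta A\setminus A;G)$. Since $A$ is closed in $X$ one has $\beta A=\overline{A}^{\beta X}$, so $\beta A\setminus A$ is a closed subspace of $\beta X\setminus X$. The monotonicity of the small cohomological dimension on closed subspaces of compact (hence normal) spaces, namely $d_f(\beta A\setminus A;G)\le d_f(\beta X\setminus X;G)$, is a standard fact from dimension theory (see [N]), and the desired inequality follows at once.

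Since, however, the statement is phrased for an arbitrary closed subspace $A$ of a \emph{normal} space $X$ (not merely metrizable), I would give a direct argument straight from Definition 2.6, which is the cleaner route and avoids the restriction. First I would fix closed $B\subseteq A$ and an integer $m\ge d^f_\infty(X;G)$, and consider the inclusions $i:B\to A$, $j:A\to X$, and $k=j\circ i:B\to X$. Functoriality of the \v{C}ech border cohomology functor $\hat{\rm H}^*_\infty(-;G)$ — established in Theorem 1.4 — gives $k^*_{B,\infty}=i^*_{B,\infty}\circ j^*_{A,\infty}$ as a composite
\[
\hat{H}^m_\infty(X;G)\xrightarrow{\ j^*_{A,\infty}\ }\hat{H}^m_\infty(A;G)\xrightarrow{\ i^*_{B,\infty}\ }\hat{H}^m_\infty(B;G).
\]
By the definition of $d^f_\infty(X;G)$, since $B$ is closed in $X$ and $m\ge d^f_\infty(X;G)$, the composite $k^*_{B,\infty}$ is an epimorphism. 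A surjective composite forces the second map $i^*_{B,\infty}$ to be surjective. Since $B$ was an arbitrary closed subset of $A$ and $m$ an arbitrary integer $\ge d^f_\infty(X;G)$, the smallest $n$ witnessing the defining property for $A$ is at most $d^f_\infty(X;G)$, i.e. $d^f_\infty(A;G)\le d^f_\infty(X;G)$.

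The one technical point to verify is that a closed subset $B$ of $A$ is also closed in $X$ — this is immediate since $A$ is closed in $X$ and closedness is transitive — and that the restriction of the \v{C}ech border cohomology functor to the subcategory generated by such inclusions is genuinely functorial on $\mathcal{N}_p$, which was recorded in Theorem 1.4 together with the preceding discussion of induced homomorphisms; the inclusion maps in question are proper because they are closed embeddings into normal spaces and preimages of compacta are closed subsets of compacta, hence compact. I do not expect any real obstacle here: the whole content is the trivial lemma that if $g\circ f$ is an epimorphism then $g$ is an epimorphism, combined with functoriality and the definition. If one instead wishes to route the proof through Theorem 2.5 in the metrizable case, the only extra ingredient needed is the elementary fact $\beta A=\overline{A}^{\beta X}$ for $A$ closed in normal $X$, so that $\beta A\setminus A$ embeds as a closed subspace of $\beta X\setminus X$, together with the classical monotonicity of $d_f$ from [N].
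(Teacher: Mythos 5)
Your direct argument is correct and is essentially the paper's own proof: the paper likewise factors $k_B = i_A\circ j_B$ through the closed subspace $A$, notes that $B$ closed in $A$ is closed in $X$ so the composite restriction homomorphism is an epimorphism for $m\ge d^f_\infty(X;G)$, and concludes that the second factor $\hat{H}^m_\infty(A;G)\to\hat{H}^m_\infty(B;G)$ is an epimorphism. Your remark that the alternative route via Theorem 2.5 would only cover the metrizable case is also apt, and the paper indeed avoids it.
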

\begin{proof}
Let $B$ be an arbitrary closed subset of $A$ and $j_{B}:B\to A$, $i_{A}:A\to X$ and $k_{B}:B\to X$ be the inclusion maps. Note that $k_{B}=i_{A}\cdot j_{B}$. The induced homomorphisms $k_{B,\infty}^{*}:\hat{H}_{\infty}^{n}(X;G)\to \hat{H}_{\infty}^{n}(B;G)$, $i_{A,\infty}^{*}:\hat{H}_{\infty}^{n}(X;G)\to \hat{H}_{\infty}^{n}(A;G)$ and $j_{B,\infty}^{*}:\hat{H}_{\infty}^{n}(A;G)\to \hat{H}_{\infty}^{n}(B;G)$ satisfy the equality $k_{B,\infty}^{*}=j_{B,\infty}^{*}\cdot i_{A,\infty}^{*}$.

Let $n=d_{f}^{\infty}(X;G)$. For each $m\geq n$, the homomorphisms $k^{*}_{B,\infty}:\hat{H}_{\infty}^{m}(X;G)\to \hat{H}_{\infty}^{m}(B;G)$ and $i_{A,\infty}^{*}:\hat{H}_{\infty}^{m}(X;G)\to \hat{H}_{\infty}^{m}(A;G)$ are epimorphisms. Hence, the homomorphism $j_{B,\infty}^{*}:\hat{H}_{\infty}^{m}(A;G)\to \hat{H}_{\infty}^{m}(B;G)$ is also an ephimorphism for each $m\geq n$. Thus, $d_{f}^{\infty}(A;G)\leq n=d_{f}^{\infty}(X;G)$.
\end{proof}
\begin{corollary}
For each closed subspace $A$ of a metrizable space $X$,
\[d^{f}_{\infty}(A;G)\leq d_{f}(\beta X\setminus X;G).\]
\end{corollary}
\begin{Definition}
The border large cohomological dimension $D_{\infty}^{f}(X;G)$ of normal space $X$ with respect to group $G$ is defined to be the largest integer $n$ such that $\hat{H}_{\infty}^{n}(X,A;G)\neq 0$ for some closed set $A$ of $X$. 

The border large cohomological dimension of $X$ with coefficient group $G$ is a function $D_{\infty}^{f}:\mathcal{N}\to {\rm N}\cup \{0,+ \infty\}:X\to n$, where $D_{\infty}^{f}(X;G)=n$ and ${\rm N}$ is the set of all positive integers.
\end{Definition}
\begin{theorem}
For each metrizable space $X$, one has
\[D_{\infty}^{f}(X;G)= D_{f}(\beta X\setminus X;G),\]
where $D_{f}(\beta X\setminus X;G)$ is the large cohomological dimension of $\beta X\setminus X$ (see $\rm{[N], p.199}$).
\end{theorem}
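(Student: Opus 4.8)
The plan is to mirror the structure of the proof of Theorem 2.7, reducing the statement about border large cohomological dimension to the already-established isomorphism between border cohomology of $(X,A)$ and \v{C}ech cohomology of $(\beta X\setminus X,\beta A\setminus A)$ (Theorem 2.1). First I would observe that, by definition, $D_{\infty}^{f}(X;G)=n$ means $n$ is the largest integer for which there is a closed set $A\subseteq X$ with $\hat{H}_{\infty}^{n}(X,A;G)\neq 0$, while $D_{f}(\beta X\setminus X;G)=n$ means $n$ is the largest integer for which there is a closed set $B\subseteq\beta X\setminus X$ with $\hat{H}^{n}_{f}(\beta X\setminus X,B;G)\neq 0$. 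So the two inequalities to establish are: (i) every closed $A\subseteq X$ produces, via Theorem 2.1, a closed set of $\beta X\setminus X$ realizing the same nonvanishing group, giving $D_{\infty}^{f}(X;G)\leq D_{f}(\beta X\setminus X;G)$; and (ii) conversely every closed $B\subseteq\beta X\setminus X$ is "captured" by a remainder $\beta A\setminus A$ for a suitable closed $A\subseteq X$, giving the reverse inequality.

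For direction (i), given a closed $A\subseteq X$, the pair $(X,A)$ lies in $ob(\mathcal{M}^{2}_{p})$, so Theorem 2.1 yields $\hat{H}^{n}_{\infty}(X,A;G)=\hat{H}^{n}_{f}(\beta X\setminus X,\beta A\setminus A;G)$. Since $\beta A\setminus A$ is a closed subset of $\beta X\setminus X$ (being the remainder of the closed subspace $A$, one has $\beta A=\overline{A}^{\beta X}$ and hence $\beta A\setminus A=\overline{A}^{\beta X}\cap(\beta X\setminus X)$ is closed there), any $n$ realized on the left is realized on the right, proving $D_{\infty}^{f}(X;G)\leq D_{f}(\beta X\setminus X;G)$.

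For direction (ii), I would reuse the construction already carried out in the proof of Theorem 2.7: for an arbitrary closed $B\subseteq\beta X\setminus X$ and an open neighbourhood $U$ of $B$ in $\beta X\setminus X$, Lemma 5 of [Sm$_4$] produces an open $W\subseteq\beta X$ with $W\cap(\beta X\setminus X)$ a neighbourhood of $B$ and, setting $A=\bar{W}^{\beta X}\cap X$, one gets $B\subseteq\beta A\setminus A\subseteq U$. Then, exactly as in Theorem 2.7, a cohomology class of the pair $(\beta X\setminus X,B)$ represented on the nerve of a (swollen) closed cover factors through the nerve of the corresponding cover of $(\beta X\setminus X,\beta A\setminus A)$, so that if $\hat{H}^{n}_{f}(\beta X\setminus X,B;G)\neq 0$ then, by choosing $U$ small and passing to the direct limit, some $\hat{H}^{n}_{f}(\beta X\setminus X,\beta A\setminus A;G)\neq 0$, which by Theorem 2.1 equals $\hat{H}^{n}_{\infty}(X,A;G)$. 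This gives $D_{f}(\beta X\setminus X;G)\leq D_{\infty}^{f}(X;G)$, and combining the two inequalities finishes the proof.

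The main obstacle is direction (ii): one must show that the nonvanishing of the relative group $\hat{H}^{n}_{f}(\beta X\setminus X,B;G)$ for the "bad" subset $B$ is inherited by the relative group for the remainder pair $\beta A\setminus A$, which requires care with relative (not just absolute) cohomology — in particular checking that the witnessing cocycle can be transported along the restriction of the nerve map and survives the direct limit over finite closed covers, rather than being killed by the passage to a coarser cover. This is the relative analogue of the cocycle-transport argument in Theorem 2.7, and I expect it to go through verbatim once one notes that the swellings of Lemma 4 of [Sm$_4$] can be chosen compatibly on $B$, on $\beta A\setminus A$, and in $\beta X$, so that the three nerves in question are isomorphic and the class $a_\alpha$ lives simultaneously in all of them.
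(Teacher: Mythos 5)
Your proposal is correct and follows essentially the same route as the paper: one inequality comes directly from Theorem 2.1 together with the fact that $\beta A\setminus A$ is closed in $\beta X\setminus X$, and the reverse inequality reuses the Smirnov-lemma construction from the small cohomological dimension theorem (the paper's Theorem 2.5, which you call 2.7) to squeeze a remainder $\beta A\setminus A$ between a closed set $B$ and a neighbourhood and transport the nonvanishing relative class, just as the paper's own (likewise sketched) argument does. The only differences are cosmetic: the paper phrases the second inequality as a proof by contradiction, while you argue it directly.
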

\begin{proof}
Let $D_{f}(\beta X\setminus X;G)=n$. Consider an arbitrary closed subspace $A$ of $X$. The remainder $\beta A\setminus A$ is a closed subset of $\beta X\setminus X$. By the assumption, we have $\hat{H}^{m}(\beta X\setminus X, \beta A\setminus A;G)=0$ for each $m>n$. Theorem 2.1 implies that $\hat{H}^{m}_{\infty}(X,A;G)=0$ for each $m>n$ and $A\subset X$. Thus,
\[D_{\infty}^{f}(X;G)\leq n=D_{f}(\beta X\setminus X;G).~~~~~~~~~~~~(8)\]

Let $D^{f}_{\infty}(X;G)=n$. Assume that $D_{f}(\beta X\setminus X;G)=n_{1} > n$. Then there is a closed set $B$ in $\beta X\setminus X$  such that $\hat{H}^{n_{1}}(\beta X\setminus X, B;G)\neq 0$. Using Lemma 4 of [Sm$_{4}$] and the proof of Theorem 2.5 we can show that there is a closed set $A$ of $X$ such that $B\subset \beta A\setminus A$, and $\hat{H}^{n_{1}}(\beta X\setminus X, \beta A\setminus A;G)\neq 0$. By Theorem 2.1 $\hat{H}^{n_{1}}_{\infty}(X,A;G)\neq 0$. But it is not possible because $D_{f}^{\infty}(X;G)=n$. Therefore, $n_{1}\leq n$. Thus,
\[D_{f}(\beta X\setminus X;G)\leq n= D^{f}_{\infty}(X;G)~~~~~~~~~~~~(9)\]
The inequalities (8) and (9) imply
\[D^{f}_{\infty}(X;G)=D_{f}(\beta X\setminus X;G).\]
\end{proof}
\begin{theorem}
If $A$ is a closed subset of normal space $X$, then 
\[D_{\infty}^{f}(A;G)\leq D_{\infty}^{f}(X;G).\]
\end{theorem}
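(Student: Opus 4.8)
\textit{Proof proposal.} The plan is to unwind the definition of the border large cohomological dimension and push everything through the exact \v{C}ech border cohomology sequence of a triple established at the end of Section~1. Set $n = D^f_\infty(X;G)$. If $n = +\infty$ there is nothing to prove, and the case in which no closed subset of $A$ gives a nonzero group is also trivial, so assume $n$ is a (finite) integer. By the definition of $D^f_\infty$, it suffices to show that $\hat H^m_\infty(A,B;G) = 0$ for every closed subset $B$ of $A$ and every integer $m > n$; this will force $D^f_\infty(A;G) \le n$. (Recall that a closed subspace of a normal space is normal, so $D^f_\infty(A;G)$ is defined.)

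Fix such a $B$ and such an $m$. Since $B$ is closed in $A$ and $A$ is closed in $X$, the set $B$ is closed in $X$, so $(X,A,B)$ is a triple consisting of the normal space $X$ and its closed subsets $B \subset A$. Applying the triple theorem of Section~1 to this triple yields the exact sequence
\[
\cdots \longrightarrow \hat H^m_\infty(X,B;G) \xrightarrow{\ \bar i^*_\infty\ } \hat H^m_\infty(A,B;G) \xrightarrow{\ \bar\delta^{m+1}_\infty\ } \hat H^{m+1}_\infty(X,A;G) \longrightarrow \cdots .
\]
Because $B$ and $A$ are closed in $X$ and $m > n$ (hence also $m+1 > n$), the definition of $n = D^f_\infty(X;G)$ gives $\hat H^m_\infty(X,B;G) = 0$ and $\hat H^{m+1}_\infty(X,A;G) = 0$. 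Exactness at $\hat H^m_\infty(A,B;G)$ says $\operatorname{im}\bar i^*_\infty = \ker\bar\delta^{m+1}_\infty$; since the domain of $\bar i^*_\infty$ is $0$, we get $\ker\bar\delta^{m+1}_\infty = 0$, so $\bar\delta^{m+1}_\infty$ is a monomorphism from $\hat H^m_\infty(A,B;G)$ into the trivial group $\hat H^{m+1}_\infty(X,A;G) = 0$. Therefore $\hat H^m_\infty(A,B;G) = 0$, which is exactly what was needed, and consequently $D^f_\infty(A;G) \le n = D^f_\infty(X;G)$.

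There is essentially no deep obstacle here: the argument is the cohomology-of-a-triple analogue of the monotonicity proof for the small cohomological dimension (Theorem~2.6), with the long exact sequence of a pair replaced by the exact sequence of the triple $(X,A,B)$. The only points that require (routine) care are the transitivity of closedness, which is what makes the triple sequence applicable at all, and the bookkeeping with the degenerate conventions ($n=+\infty$, or the absence of a closed $B\subset A$ realizing a nonzero group), in each of which the asserted inequality holds trivially.
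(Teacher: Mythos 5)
Your argument is correct and is essentially the paper's own proof: both apply the exact \v{C}ech border cohomology sequence of the triple $(X,A,B)$ from Theorem 1.8 and conclude $\hat H^m_\infty(A,B;G)=0$ for $m>D^f_\infty(X;G)$ from the vanishing of the neighbouring relative groups of $(X,B)$ and $(X,A)$. If anything, your write-up is slightly more careful than the paper's, since you explicitly note that the term $\hat H^{m+1}_\infty(X,A;G)$ on the other side also vanishes (because $m+1>n$) and you dispose of the degenerate cases.
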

\begin{proof}
By Theorem 1.10, for each closed set $B$ of $A$, there is the exact \v{C}ech border cohomological sequence
\begin{center}
\begin{tikzpicture}
\node (A) {$\cdots$};
\node (B) [node distance=2cm, right of=A] {$\hat{H}^{m-1}_{\infty}(A;G)$};
\node (C) [node distance=3cm, right of=B] {$\hat{H}^{m}_{\infty}(X,A;G)$};
\node (D) [node distance=3cm, right of=C] {$\hat{H}^{m}_{\infty}(X,B;G)$};
\node (E) [node distance=3cm, right of=D] {$\hat{H}^{m}_{\infty}(A,B;G)$};
\node (F) [node distance=2cm, right of=E] {$\cdots$};
\draw[->] (A) to node [above]{}(B);
\draw[->] (B) to node [above]{$\bar{\delta}_{\infty}^{m}$} (C);
\draw[->] (C) to node [above]{$\bar{j}^{*}_{\infty}$}(D);
\draw[->] (D) to node [above]{$\bar{i}^{*}_{\infty}$}(E);
\draw[->] (E) to node [above]{}(F);
\end{tikzpicture}
\end{center}

It is clear that, if $m>D_{\infty}^{f}(X;G)$, then $\hat{H}_{\infty}^{m}(X,A;G)=\hat{H}_{\infty}^{m}(X,B;G)=0$. Consequently, $\hat{H}^{m}_{\infty}(A,B;G)=0$. Thus, we have
\[D_{\infty}^{f}(A;G)\leq D_{\infty}^{f}(X;G).\]
\end{proof}
\begin{corollary}
For each closed subspace $A$ of metrizable space $X$, one has
\[D^{f}_{\infty}(A;G)\leq D_{f}(\beta X\setminus X;G).\]
\end{corollary}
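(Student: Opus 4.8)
The plan is to obtain this corollary at once from the two theorems immediately preceding it, so the proof is only a couple of lines. First I would note that a closed subspace $A$ of a metrizable space $X$ is itself metrizable, and in particular normal; this ensures that $A$ falls under the hypotheses of both results I want to quote.

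Then I would invoke the monotonicity theorem for the border large cohomological dimension, which gives $D_{\infty}^{f}(A;G)\leq D_{\infty}^{f}(X;G)$ because $A$ is a closed subset of the normal space $X$. Since $X$ is metrizable, the identification theorem $D_{\infty}^{f}(X;G)= D_{f}(\beta X\setminus X;G)$ applies, and composing the two relations yields $D_{\infty}^{f}(A;G)\leq D_{\infty}^{f}(X;G)= D_{f}(\beta X\setminus X;G)$, which is the assertion.

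There is no genuine obstacle here: whatever difficulty the statement carries was already absorbed into the proofs of the two theorems just cited (in particular into the neighbourhood argument of Theorem 2.5 that feeds the identification theorem). If one wanted an argument that stays entirely on the remainder side, one could instead observe that $\beta A\setminus A$ is a closed subset of $\beta X\setminus X$ and combine monotonicity of the classical large cohomological dimension $D_{f}$ under passage to closed subspaces with the identification $D_{\infty}^{f}(A;G)= D_{f}(\beta A\setminus A;G)$; this gives the same bound. I would present the first route, as it uses only the two statements sitting directly above the corollary.
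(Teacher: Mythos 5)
Your proof is correct and is exactly the intended derivation: the paper states this corollary without proof, and it follows immediately by chaining Theorem 2.10 ($D_{\infty}^{f}(A;G)\leq D_{\infty}^{f}(X;G)$ for a closed subspace of a normal, hence in particular metrizable, space) with Theorem 2.9 ($D_{\infty}^{f}(X;G)=D_{f}(\beta X\setminus X;G)$ for metrizable $X$), just as you do. Your alternative route through $D_{\infty}^{f}(A;G)=D_{f}(\beta A\setminus A;G)$ is also fine but unnecessary here.
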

\begin{theorem}
If $X$ is a normal space then 
\[d_{\infty}^{f}(X;G)\leq D_{\infty}^{f}(X;G).\]
\end{theorem}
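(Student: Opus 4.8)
The plan is to read the inequality directly off the exact \v{C}ech border cohomology sequence of Theorem~\ref{1.6}. If $D_{\infty}^{f}(X;G)=+\infty$ there is nothing to prove, so I would assume that $n:=D_{\infty}^{f}(X;G)$ is a finite integer. By the very definition of the border large cohomological dimension, this says precisely that $\hat{H}_{\infty}^{k}(X,C;G)=0$ for every closed subset $C$ of $X$ and every integer $k>n$.

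Next I would fix an arbitrary closed subset $A$ of $X$ and an arbitrary integer $m\geq n$, and consider the relevant portion of the exact cohomology sequence of the pair $(X,A)$ around the term $\hat{H}_{\infty}^{m}(A;G)$:
\[\cdots \longrightarrow \hat{H}_{\infty}^{m}(X;G)\xrightarrow{\ i^{*}_{A,\infty}\ } \hat{H}_{\infty}^{m}(A;G)\xrightarrow{\ \delta_{\infty}\ } \hat{H}_{\infty}^{m+1}(X,A;G)\longrightarrow \cdots ,\]
where $\delta_{\infty}$ is the coboundary homomorphism and $i^{*}_{A,\infty}$ is the map induced by the inclusion $i:A\to X$. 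Since $m+1>n=D_{\infty}^{f}(X;G)$, the group $\hat{H}_{\infty}^{m+1}(X,A;G)$ vanishes, so $\delta_{\infty}$ is the zero map; hence exactness at $\hat{H}_{\infty}^{m}(A;G)$ forces $\operatorname{im} i^{*}_{A,\infty}=\ker\delta_{\infty}=\hat{H}_{\infty}^{m}(A;G)$, i.e.\ $i^{*}_{A,\infty}$ is an epimorphism.

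Finally, since the closed set $A$ and the integer $m\geq n$ were arbitrary, the integer $n$ satisfies the condition appearing in the definition of the border small cohomological dimension, so by the minimality in that definition $d_{\infty}^{f}(X;G)\leq n=D_{\infty}^{f}(X;G)$, which is the assertion. The whole argument is a purely formal consequence of the exactness established in Theorem~\ref{1.6}; the only points needing attention are the bookkeeping of degrees (one must match the hypothesis $m\geq n$ with vanishing in degree $m+1$) and the trivial disposal of the case $D_{\infty}^{f}(X;G)=+\infty$, so I do not anticipate any genuine obstacle.
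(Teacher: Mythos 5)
Your argument is correct and is essentially the paper's own proof: both read the inequality off the exact \v{C}ech border cohomology sequence of the pair $(X,A)$ from Theorem 1.6, using the vanishing of $\hat{H}_{\infty}^{k}(X,A;G)$ for $k>D_{\infty}^{f}(X;G)$ to conclude that $i^{*}_{A,\infty}$ is an epimorphism in all degrees $\geq D_{\infty}^{f}(X;G)$. The only difference is cosmetic bookkeeping (you argue with vanishing in degree $m+1$, the paper with degree $m$ for $m>D_{\infty}^{f}(X;G)$), and your explicit disposal of the case $D_{\infty}^{f}(X;G)=+\infty$ is a harmless addition.
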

\begin{proof}
Let $A$ be a closed subset of normal space $X$. Consider the exact \v{C}ech border cohomological sequence of pair $(X,A)$
\begin{center}
\begin{tikzpicture}
\node (A) {$\cdots$};
\node (B) [node distance=2cm, right of=A] {$\hat{H}^{m-1}_{\infty}(A,B;G)$};
\node (C) [node distance=3cm, right of=B] {$\hat{H}^{m}_{\infty}(X,A;G)$};
\node (D) [node distance=3cm, right of=C] {$\hat{H}^{m}_{\infty}(X;G)$};
\node (E) [node distance=3cm, right of=D] {$\hat{H}^{m}_{\infty}(A;G)$};
\node (F) [node distance=2cm, right of=E] {$\cdots$};
\draw[->] (A) to node [above]{}(B);
\draw[->] (B) to node [above]{${\delta}_{\infty}^{m}$} (C);
\draw[->] (C) to node [above]{${j}^{*}_{\infty}$}(D);
\draw[->] (D) to node [above]{${i}^{*}_{\infty}$}(E);
\draw[->] (E) to node [above]{}(F);
\end{tikzpicture}
\end{center}

Let $m>D_{\infty}^{f}(X;G)$. Note that $j_{\infty}^{*}:\hat{H}_{\infty}^{m-1}(X;G)\to \hat{H}_{\infty}^{m-1}(A;G)$ is an epimorphism. Hence, 
\[d_{\infty}^{f}(X;G)\leq D_{\infty}^{f}(X;G).\]
\end{proof}
\begin{corollary}
For each metrizable space $X$, one has
\[d_{f}(\beta X\setminus X;G)\leq D^{f}_{\infty}(X;G)\]
and 
\[d^{f}_{\infty}(X;G)\leq D_{f}(\beta X\setminus X;G).\]
\end{corollary}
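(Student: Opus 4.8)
The plan is to derive both inequalities of the corollary formally, by combining the identifications already established --- Theorem 2.5, which gives $d^{f}_{\infty}(X;G)=d_{f}(\beta X\setminus X;G)$, and Theorem 2.8, which gives $D^{f}_{\infty}(X;G)=D_{f}(\beta X\setminus X;G)$ --- with the comparison inequality $d^{f}_{\infty}(X;G)\leq D^{f}_{\infty}(X;G)$ of the preceding theorem. The only hypothesis worth checking is that this last theorem is applicable to $X$: it is stated for normal spaces, and every metrizable space is normal, so it applies.

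For the first inequality I would substitute along the evident chain. By Theorem 2.5 the quantity $d_{f}(\beta X\setminus X;G)$ equals $d^{f}_{\infty}(X;G)$; by the preceding theorem $d^{f}_{\infty}(X;G)\leq D^{f}_{\infty}(X;G)$; and by Theorem 2.8 the right-hand side equals $D_{f}(\beta X\setminus X;G)$. Reading the chain off yields $d_{f}(\beta X\setminus X;G)=d^{f}_{\infty}(X;G)\leq D^{f}_{\infty}(X;G)=D_{f}(\beta X\setminus X;G)$, and in particular $d_{f}(\beta X\setminus X;G)\leq D^{f}_{\infty}(X;G)$. For the second inequality the same chain, now invoking only the equality of Theorem 2.8, gives $d^{f}_{\infty}(X;G)\leq D^{f}_{\infty}(X;G)=D_{f}(\beta X\setminus X;G)$.

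There is no genuine obstacle here: the corollary is a bookkeeping consequence of Theorem 2.5, Theorem 2.8 and the preceding comparison theorem. The only subtlety --- if it deserves the name --- is that metrizability enters solely to guarantee normality, so that the comparison theorem applies, and to make Theorems 2.5 and 2.8 available.
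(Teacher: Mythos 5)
Your proposal is correct and matches the paper's intent: the corollary is stated without an explicit proof precisely because it follows, as you show, by combining Theorem 2.5, Theorem 2.8 and the comparison $d^{f}_{\infty}(X;G)\leq D^{f}_{\infty}(X;G)$ of the preceding theorem, with metrizability used only to ensure normality and the applicability of Theorems 2.5 and 2.8. Nothing further is needed.
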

\begin{remark}
The results of this paper also hold for spaces satisfying the compact axiom of countability. Recall that a space $X$ satisfies the compact axiom of countability if for each compact subset $B\subset X$ there exists a compact subset $B^{'}\subset X$ such that $B\subset B^{'}$ and $B^{'}$ has a countable or finite fundamental systems of neighbourhoods (see Definition 4 of [Sm$_{4}$], p.143). A space $X$ is complete in the sense of \v{C}ech if and only if it is $G_{\delta}$ type set in some compact extension. Each locally metrizable spaces, complete in the seance of \v{C}ech spaces [\v{C}] and locally compact spaces satisfy the compact axiom of countability.
\end{remark}


\begin{thebibliography}{99}

\bibitem [A] {2}  J.  M.   Aarts,  Completeness   degree.   A  generalization  of  dimension.   Fund.   Math. 63(1968),  27-41.

\bibitem [A-N] {3} J. M. Aarts and T. Nishiura, Dimension and Extensions. North-Holland Math. Library, Amsterdam, London, New York and Tokyo, 1993.

\bibitem [Ak-Chin-T]{1} Y. Akaike, N. Chinen and  K. Tomoyasu, Large inductive dimension of the Smirnov remainder, Houston J. Math., 34 (2008), 501-510.

\bibitem [Al] {4} P. Alexandroff,  On  the  dimension  of normal  spaces.  Proc.  Roy.  London.   Ser.  A., 189(1947), 11-39.

\bibitem [B$_1$] {5}V. Baladze, On some combinatorial properties  of remainders of extensions  of topological spaces and on factorization of uniform mappings. Soobshch. Akad.  Nauk Gruzin.  SSR.,   98(1980), 293-296.

\bibitem [B$_{2}$] {} V. Baladze, On Uniform shapes. Bull. Georgian Acad. Sci., 169(2002), 26-29.

\bibitem [B$_{3}$] {} V. Baladze, Characterization of precompact shape and homology properties of remainders. Topology Appl. 142(2004), 181-196.

\bibitem [B-Tu] { } V. Baladze and L.Turmanidze, On homology and cohomology groups of remainders. Georgian Math. J. 11(2004), 613-633.

\bibitem [Ba] {9} B.  J. Ball, Geometric  topology  and  shape  theory:   a survey  of problems  and  results, Bull. Amer. Math.Soc. 82(1976), 791-804.

\bibitem [Bo] {10} M.  F. Bokshtein, Homology invariants of topological spaces. Tr. Mosk. Mat. Obs. 5(1956), 3-80.

\bibitem [C] {11} A. Calder, The cohomotopy groups of Stone-\v{C}ech increments, Indag. Math. 34 (1972), 37-44.

\bibitem [Ch] {12} M. G. Charalambous, Spaces with  increment of dimension $n$. Fund.  Math.  93(1976), 97-107.

\bibitem [Chi$_1$] {13} A.  Chigogidze, Inductive  dimensions  for completely  regular  spaces.  Comment. Math. Univ. Carolinae, 18(1977), 623-637.

\bibitem [Chi$_2$] {14} A. Chigogidze, On the dimension  of increments  of Tychonoff spaces. Fund.  Math.,111(1981), 25-36.

\bibitem [\v{C}] {} E. \v{C}ech, On bicompact spaces. Ann. Math., 38 (1937), 823-844.

\bibitem [D] {12121} A.N. Dranishnikov, On a problem of P.S. Alexandrov, Math. USSR Sbornik., 63(1989), 539-545.

\bibitem [D-Dy] {12122} A. Dranishnikov and J. Dydak, Extension dimension and extension types, Proc. Steklov Inst. Math., 212 (1996), 55-88

\bibitem [E-St] {15} S. Eilenberg and N.  E.  Steenrod, Foundations of Algebraic Topology. Princeton University  Press,  Princeton, New Jersey,  1952.

\bibitem [En] {16} R. Engelking, General Topology. PWN-Polish Scientific Publishers, Warsaw, 1977.

\bibitem [F$_1$] {17} H. Freudenthal, Neuaufbau der Endentheorie, Ann. Math., 43 (1942), 261-279.

\bibitem [F$_2$] {18} H. Freudenthal, Kompaktisierungen und Bikompaktisierungen. Ind. Math. Amst., 13(1951), 184-192.

\bibitem [I$_1$] {19} H. Inasaridze,  On finite order extensions and remainders of completely regular spaces, Doklady Acad. Nauk USSR 166(1966),1043-1045.

\bibitem [I$_2$] {20} H. Inasaridze, A generalization of perfect mappings, Doklady Acad. Nauk USSR 168 (1966), 266-268.

\bibitem [I$_3$] {123} H. Inasaridze, Universal functors, Bull. Georgian Acad. Sci., 38(1965), 513-520

\bibitem [K$_1$] {21} J. Keesling, The Stone-\v{C}ech compactification and shape dimension. Topology Proceedings, 2(1977), 483-508.

\bibitem [K$_2$] {22} J. Keesling, Decompositions  of the Stone-\v{C}ech compactification which are shape equivalences. Pac. J.Math., 75(1978), 455-466.

\bibitem [K-Sh] {23} J. Keesling and  R.B. Sher, Shape  properties  of the  Stone-\v{C}ech compactification. General Topology and Appl.,9(1978), 1-8.

\bibitem [M-S] {24} S. Marde\v{s}i\'{c} and J. Segal, Shape theory. North-Holland  Publishing Co.,  Amsterdam-New York, 1982.

\bibitem [Mi] {251} T. Miyata, Cohomological Dimension of Uniform Spaces, Quaestiones Mathematicae, 19(1996), 137-162. 

\bibitem [Mo] {25} K.  Morita, On bicompactifications of semibicompact spaces. Sci. Rep. Tokyo Bunrika Daigaku. Sect. A., 4(1952), 222-229.

\bibitem [N] {}  K. Nagami, Dimension Theory (with an appendix: Y. Kodama, Cohomological Dimension Theory). Academic Press, New York and London, 1970.

\bibitem [No] {26} S. Nowak, Algebraic theory  of fundamental dimension. Dissertationes Math., 187(1981), 1-59.

\bibitem [Sk] {27} E.G. Skljarenko, Some questions  in the theory  of bicompactifications.Izv. Akad.  Nauk SSSR  Ser.  Mat., 26(1962), 427-452.

\bibitem [Sm$_1$] {28} Ju. M.  Smirnov, On  the  dimension  of proximity   spaces. Mat. Sb. (N.S.) 38(80)(1956), 283-302.

\bibitem [Sm$_2$] {29} Ju. M. Smirnov, Proximity and construction of compactifications with given properties. Proc. of the Second Prague  Topol. Symp.  1966, Prague, (1967), 332-340.

\bibitem [Sm$_3$] {30}  Ju. M. Smirnov, Dimension of increments  of proximity  spaces and of topological spaces, Dokl.  Akad.  Nauk  SSSR., 168(1966), 528-531.

\bibitem [Sm$_4$] {31}  Ju. M.  Smirnov, On the dimension  of remainders in bicompact  extensions  of proximity and topological  spaces, Mat.  Sb. (N.S.) 69(111)(1966), 141-160.

\bibitem [Sm$_5$] {32}  Ju. M.  Smirnov, On the dimension  of remainders in bicompact  extensions  of proximity and topological  spaces. II, Mat.  Sb. (N.S.) 71  (113)(1966), 454-482.

\bibitem [Ts] {} I.Tsereteli, Two analogues of Hurewicz's theorem, Proceedings of Tbilisi university, 264, Mathematics. Mechanics. Astronomy. 21, 73-90. 


\bibitem [Z$_1$] {36}  L. Zambakhidze, On some properties of spaces with compact remainders of finite orders, Dokl. Akad. Nauk SSSR, 192(1970), 263-266.

\bibitem [Z$_2$] {33}  L. Zambakhidze, On dimension-like functions, Proc. Tbilisi Math. Inst., 56 (1977), 52-98.

\bibitem [V] {34}  H. de Vries, Compact  spaces and compactifications. Doct. Diss, Amsterdam, 1962.
\end{thebibliography}
\end{document}